\documentclass[11pt]{article}

\usepackage{amsmath, amssymb, amsfonts, ifthen, verbatim, fullpage}

\setlength{\textheight}{8.9in} \setlength{\textwidth}{6.3in}
\setlength{\topmargin}{0pt} \setlength{\evensidemargin}{1pt}
\setlength{\oddsidemargin}{1pt} \setlength{\headsep}{10pt}
\setlength{\parskip}{1mm} \setlength{\parindent}{0mm}

% Started October 2010
% First version by Matt, Bojan, Jacob
% Final version January 13, 2011

%\date{}

%%%%%%%%%%%%%%%%%%%%%%%%%%%%%%
%
% Authors' definitions
%
%%%%%%%%%%%%%%%%%%%%%%%%%%%%%%

\newtheorem{theorem}{Theorem}[section]
\newtheorem{corollary}[theorem]{\bf Corollary}
\newtheorem{lemma}[theorem]{\bf Lemma}
\newtheorem{proposition}[theorem]{\bf Proposition}
\newtheorem{conjecture}[theorem]{\bf Conjecture}
\newtheorem{problem}[theorem]{\bf Problem}

\newcommand{\DEF}[1]{{\em #1\/}}

\newcommand{\qed}{\hfill$\square$\bigskip}
\newenvironment{proof}[1][]%
{\ifthenelse{\equal{#1}{}}{\noindent\textit{Proof.
    }}{\noindent\textit{#1. }}}%
{\qed}

%%%%%%%%%%%%%%%%%%%%%%%%%%%%%%

\begin{document}

\title{Minimum degree condition forcing complete graph immersion}

\author{
   Matt DeVos\thanks{Supported in part by an NSERC Discovery Grant (Canada) and a Sloan Fellowship.}\\[1mm]
   %~\thanks{Email: {\tt mdevos@sfu.ca}}
   {Department of Mathematics}\\{Simon Fraser University}\\{Burnaby, B.C. V5A 1S6}
\and
   Zden\v{e}k Dvo\v{r}\'ak\thanks{Supported in part by the grant GA201/09/0197 of Czech Science Foundation
   and by Institute for Theoretical Computer Science (ITI), project 1M0021620808 of Ministry of~Education of~Czech Republic.}\\[1mm]
   Department of Applied Mathematics\\Charles University\\Prague, Czech Republic%\\email: {\tt rakdver@kam.mff.cuni.cz}
\and
   Jacob Fox\thanks{Supported in part by a Simons Fellowship.}\\[1mm]
   Department of Mathematics\\MIT\\Cambridge, MA 02139
\and
   Jessica McDonald\thanks{Supported by an NSERC Postdoctoral fellowship.}\\[1mm]
   %~\thanks{Email: {\tt jessica\textunderscore mcdonald@sfu.ca}}
   {Department of Mathematics}\\{Simon Fraser University}\\{Burnaby, B.C. V5A 1S6}
\and
   Bojan Mohar\thanks{Supported in part by an NSERC Discovery Grant (Canada), by the Canada Research Chair program, and by the Research Grant P1--0297 of ARRS (Slovenia).}~~\thanks{On leave from: IMFM \& FMF, Department of Mathematics, University of Ljubljana, Ljubljana, Slovenia.}\\[1mm]
   %~~\thanks{Email: {\tt mohar@sfu.ca}}
   {Department of Mathematics}\\{Simon Fraser University}\\{Burnaby, B.C. V5A 1S6}
\and
   Diego Scheide\thanks{Postdoctoral fellowship at Simon Fraser University, Burnaby.}\\[1mm]
   %~~~\thanks{Email: {\tt dscheide@sfu.ca}}
   {Department of Mathematics}\\{Simon Fraser University}\\{Burnaby, B.C. V5A 1S6}
}

%\keywords{Immersion, Strong immersion, graph minor, Hadwiger Conjecture.} % Keywords
%\msc{05C35, 05C83} % Math. Subj. Class. codes

\maketitle

\begin{abstract}
An immersion of a graph $H$ into a graph $G$ is a one-to-one mapping
$f:V(H) \to V(G)$ and a collection of edge-disjoint paths in $G$, one for
each edge of $H$, such that the path $P_{uv}$ corresponding to edge $uv$
has endpoints $f(u)$ and $f(v)$. The immersion is strong if the paths $P_{uv}$
are internally disjoint from $f(V(H))$.
It is proved that for every positive integer $t$, every simple graph of minimum
degree at least $200t$ contains a strong immersion of the complete graph $K_t$.
For dense graphs one can say even more. If the graph has order $n$ and has $2cn^2$
edges, then there is a strong immersion of the complete graph on at least $c^2 n$
vertices in $G$ in which each path $P_{uv}$ is of length 2.
As an application of these results, we resolve a problem raised by Paul Seymour
by proving that the line graph of every simple graph with average degree $d$ has
a clique minor of order at least $cd^{3/2}$, where $c>0$ is an absolute constant.

For small values of $t$, $1\le t\le 7$, every simple graph of minimum degree
at least $t-1$ contains an immersion of $K_t$ (Lescure and Meyniel \cite{LM},
DeVos et al.\ \cite{DKMO}). We provide a general class of examples showing
that this does not hold when $t$ is large.
\end{abstract}

%%%%%%%%%%%%%%%%%%%%%%%%%%%%%%%%%%%%%%%%%%%%%%%%%%%%%%%%%%%%%%%%%%%%%%

\section{Overview}

In this paper, all graphs are finite and may have loops and multiple edges.
A graph $H$ is a \DEF{minor} of a graph $G$ if (a graph isomorphic to) $H$
can be obtained from a subgraph of $G$ by contracting edges.
A graph $H$ is a \DEF{topological minor} of a graph $G$ if $G$
contains a subgraph which is isomorphic to a graph that can be obtained
from $H$ by subdividing some edges. In such a case, we also say that $G$
contains a \DEF{subdivision} of $H$.
These two ``containment'' relations are well studied, and the theory of
graph minors developed by Robertson and Seymour provides a deep understanding
of them.

In this paper, we consider a different containment relation -- graph
immersions. A pair of distinct adjacent edges $uv$ and $vw$
is \DEF{split off} from their common vertex $v$ by deleting the edges
$uv$ and $vw$, and adding the edge $uw$ (possibly in parallel to an existing edge,
and possibly forming a loop if $u = w$). A graph $H$ is
said to be \DEF{immersed} in a graph $G$ if a graph isomorphic to
$H$ can be obtained from a subgraph of $G$ by splitting off pairs of
edges (and removing isolated vertices).
If $H$ is immersed in a graph $G$, then we also say
that $G$ \DEF{contains} an \DEF{$H$-immersion}.
An alternative definition is that $H$ is immersed in $G$ if there is
a 1-1 function $\phi: V(H)\to V(G)$ such that for each edge $uv\in E(H)$,
there is a path $P_{uv}$ in $G$ joining vertices $\phi(u)$ and $\phi(v)$,
and the paths $P_{uv}$, $uv\in E(H)$, are pairwise edge-disjoint. In this language, it is natural to consider a stricter definition: if the paths $P_{uv}$, $uv\in E(H)$, are internally disjoint from $\phi(V(H))$,
then the immersion is said to be \DEF{strong}.
Clearly, subdivision containment implies minor and strong immersion containment. On the other hand, minor and immersion relations are incomparable.

Some previous investigations on immersions have been conducted from
an algorithmic viewpoint \cite{im1,im2}, while less attention has been paid to structural
issues. However, Robertson and Seymour have
extended their celebrated proof of Wagner's conjecture
\cite{RS20} to prove that graphs are well-quasi-ordered by the
immersion relation \cite{RS23}, thus confirming a conjecture of Nash-Williams.
The proof is based on a significant part of the graph minors project.
Hence, we may expect that a structural approach concerning immersions is
difficult, maybe as difficult as structure results concerning graph
minors.

An important motivation to study graph minors is given by
classical conjectures of Haj\'os and Hadwiger. They relate the chromatic
number $\chi(G)$ of a graph $G$ and (topological) clique minor containment in $G$.
Hadwiger's Conjecture \cite{hadwiger}, which dates back to 1943,
suggests a far-reaching generalization of the Four-Color Theorem and
is considered to be one of the deepest open problems in graph
theory. It states that every loopless graph without a $K_k$-minor is
$(k-1)$-col{ou}rable. The special case of the Hadwiger Conjecture when
$k=5$ is equivalent to the Four-Color Theorem. Robertson, Seymour and Thomas
\cite{RST1} proved the case when $k=6$. The cases with $k \geq 7$ are still open.
Haj\'os proposed a stronger conjecture in the 1940's that for all $k \geq 1$,
every $k$-chromatic graph contains a subdivision of the complete
graph on $k$ vertices. However, this conjecture was disproved for every
$k \geq 7$ by Catlin \cite{catlin}.

In analogy, it would be interesting to investigate relations between the chromatic number
of a graph $G$ and the largest size of a complete graph immersion.
Abu-Khzam and Langston conjectured the following in \cite{al}.

\begin{conjecture}
\label{conj1}
The complete graph $K_k$ can be immersed in any $k$-chromatic graph.
\end{conjecture}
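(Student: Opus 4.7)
The plan is to pass to a $k$-critical subgraph and induct on $k$. First I would observe that it suffices to prove the conjecture for $k$-critical graphs, since a $k$-chromatic graph contains a $k$-critical subgraph and any immersion in the subgraph gives one in the original. Every $k$-critical graph has minimum degree at least $k-1$, so for $k \leq 7$ the results of Lescure and Meyniel \cite{LM} and DeVos et al.\ \cite{DKMO} dispose of the problem immediately. For larger $k$, the minimum degree bound alone cannot suffice, as this paper exhibits a class of examples of minimum degree $k-1$ without a $K_k$-immersion; the chromatic-critical structure must be used in an essential way.

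For the inductive step, let $G$ be $k$-critical and pick a vertex $v$ of minimum degree. Since $G - v$ is $(k-1)$-chromatic, the induction hypothesis yields a $K_{k-1}$-immersion $\mathcal{I}$ in $G - v$ with branch vertices $u_1, \ldots, u_{k-1}$ and pairwise edge-disjoint connecting paths $P_{ij}$. To extend $\mathcal{I}$ to a $K_k$-immersion of $G$ using $v$ as the new branch vertex, I would need $k-1$ pairwise edge-disjoint paths $Q_1, \ldots, Q_{k-1}$ in $H := G - \bigcup_{i<j} E(P_{ij})$, with $Q_i$ joining $v$ to $u_i$. By a Menger/arborescence argument, it is enough to show that $v$ has edge-connectivity at least $k-1$ to the branch set $\{u_1, \ldots, u_{k-1}\}$ in $H$. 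Here one could try to exploit the criticality property that $G - e$ is $(k-1)$-colorable for every edge $e$, together with splitting-off theorems of Mader and Lov\'asz, to either produce the required edge-disjoint paths or reroute the paths of $\mathcal{I}$ to free up sufficient capacity near $v$.

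The main obstacle, and what I expect to be the heart of the conjecture, is that the above extension may fail for an arbitrary pairing of $v$ with $\mathcal{I}$: the paths $P_{ij}$ might consume precisely the edges needed for the $Q_i$, or $v$ may sit in a thin edge-cut separating it from the branch set in $H$. Since the minimum-degree counterexamples forbid any purely local degree-based argument, one must select $v$ and $\mathcal{I}$ jointly, and this coupling is what makes the inductive step delicate. A natural strengthening to attempt is an inductive statement producing a $K_{k-1}$-immersion whose branch vertices (or edge-usage) avoid a prescribed small set — for instance, the neighborhood of $v$ or edges of a minimum edge-cut at $v$. Designing the correct strengthened hypothesis that simultaneously passes through the induction and guarantees the required edge-connectivity in $H$ is, in my view, the principal difficulty, and it is where the chromatic criticality of $G$ (as opposed to its minimum degree) must be brought to bear decisively.
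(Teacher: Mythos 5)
This statement is a \emph{conjecture}, not a theorem: the paper does not prove it, and as far as the paper is concerned it remains open for $k \ge 8$. The paper only records that it holds for $k \le 7$ (by Lescure--Meyniel and DeVos et al.) and offers Theorem~\ref{main} as indirect supporting evidence. So there is no ``paper's own proof'' to compare against.

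Your proposal, read on its own terms, is also not a proof. You set up the standard reduction to $k$-critical graphs and an induction on $k$ correctly: a $k$-critical $G$ has minimum degree $\ge k-1$, and for any vertex $v$, $\chi(G-v)=k-1$, so the induction hypothesis gives a $K_{k-1}$-immersion in $G-v$. The question is whether one can route $k-1$ edge-disjoint paths from $v$ to the branch set inside $H = G - \bigcup_{i<j} E(P_{ij})$. But you yourself flag this as ``the main obstacle'' and offer only the vague suggestion that criticality plus Mader/Lov\'asz splitting-off ``could'' help, or that a strengthened inductive statement ``might'' be designed. That is precisely the gap, and it is the whole content of the conjecture: there is no known way to guarantee $v$ has edge-connectivity $k-1$ to $\{u_1,\dots,u_{k-1}\}$ in $H$, nor a known strengthening of the inductive hypothesis that both is provable and delivers this. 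Your identification of the difficulty is accurate and the reduction to the critical case and to the extension problem is sound as far as it goes, but the proposal stops exactly where a proof would have to begin. It should be presented as a discussion of an attack on an open problem, not as a proof.
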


This conjecture, like Hadwiger's conjecture and Haj\'os' conjecture,
is trivially true for $k \leq 4$. One of the outcomes of \cite{DKMO}
is that Conjecture \ref{conj1} holds for every $k\le 7$.
Although the results of this paper are not directly related to Conjecture \ref{conj1},
they provide some evidence that supports this conjecture.

For subdivisions and minor containment, there are classical results showing that large average degree in a simple graph (or, equivalently, large minimum degree) forces the complete graph $K_t$ as a (topological) minor.
Average degree $\Omega(t\sqrt{\log t})$ forces $K_t$ as a minor (Kostochka \cite{Ko}, Thomason \cite{Tho}), and this bound is best possible. For subdivisions, an old conjecture of Mader and Erd\H{o}s-Hajnal, which was eventually proved by Bollob\'as and Thomason \cite{BoTh} and independently by Koml\'os and Szemer\'edi \cite{KoSz}, says that there is a constant $c$ such that every graph with average degree at least $ct^2$ contains a subdivision of $K_t$, and this is tight apart from the constant $c$.
The corresponding extremal problem for complete graph immersions has been proposed in~\cite{DKMO}.

\begin{problem}
\label{prb2} Let $t$ be a positive integer.
Find the smallest value $f(t)$ such that every simple graph of minimum degree
at least $f(t)$ contains an immersion of~$K_{t}$.
\end{problem}

Clearly, $f(t)\ge t-1$. It has been proved by Lescure and Meyniel \cite{LM} and
DeVos et al.\ \cite{DKMO} that $f(t)=t-1$ for $t\le7$.
An example due to Paul Seymour (see \cite{DKMO} or \cite{LM}) shows
that $f(t)\ge t$ for every $t\ge 10$. Seymour's example is the graph obtained from the complete
graph $K_{12}$ by removing the edges of four disjoint triangles. This graph does not
contain $K_{10}$-immersion.
In Section \ref{sect:examples}, we provide a general class of examples with minimum degree
$t-1$ and without $K_t$-immersions.

The aforementioned results imply that $f(t) = O(t^2)$. However, subquadratic dependence does not follow easily from known results.
The main result of this paper is the following theorem which implies that $f(t)\le 200t$.

\begin{theorem}
\label{main}
Every simple graph with minimum degree at least\/ $200t$ contains a strong immersion of $K_t$.
\end{theorem}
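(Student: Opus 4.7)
The plan is to reduce to a highly edge-connected subgraph of $G$, use edge-splitting to contract it onto $t$ branch vertices, and then route the required paths inside the resulting small multigraph.

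For \emph{Step 1}, I would apply a classical theorem of Mader: every graph of minimum degree at least $4k$ contains a subgraph of edge-connectivity at least $k$. Since the minimum degree of $G$ is at least $200t$, this yields a subgraph $H \subseteq G$ whose edge-connectivity $\lambda(H)$ is linear in $t$. For \emph{Step 2}, pick any $t$ distinct vertices $v_1, \ldots, v_t \in V(H)$ (possible because $|V(H)| > \lambda(H) \ge t$) and iteratively apply the Mader--Lov\'asz edge-splitting (``lifting'') lemma at each non-branch vertex: the lemma says that under mild conditions one can replace two incident edges $wu, wu'$ at a non-branch vertex $w$ by the single edge $uu'$ without decreasing the pairwise edge-connectivity $\lambda(v_i, v_j)$ for any $i, j$. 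Iterating until no non-branch vertex has any incident edge left produces a multigraph $M$ on $\{v_1, \ldots, v_t\}$ in which $\lambda_M(v_i, v_j)$ is still linear in $t$ for every pair; moreover, each edge of $M$ corresponds to an internally branch-free path in $H$, with distinct edges of $M$ yielding edge-disjoint paths.

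For \emph{Step 3}, it suffices to immerse $K_t$ inside $M$: any $\binom{t}{2}$ edge-disjoint paths in $M$, one per pair of branch vertices, expand via Step~2 into $\binom{t}{2}$ edge-disjoint paths in $H$ whose interiors avoid $\{v_1, \ldots, v_t\}$, which is precisely the desired strong $K_t$-immersion. To route inside $M$, I would use the direct edges between branch vertices whenever available (so most pairs are handled by length-$1$ paths) and treat the remaining pairs by short detours, invoking the pairwise edge-connectivity to guarantee that enough unused capacity remains at every stage.

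The technical heart of the proof is Step~3, and this is where I expect the main obstacle. A naive greedy routing can lose an entire path's worth of edges from the relevant edge-connectivity at each step, which would force a quadratic-in-$t$ hypothesis. The observation that should rescue a linear bound is that $M$ has $\Theta(t^2)$ edges concentrated on only $t$ vertices, so the average multiplicity on a pair is already $\Theta(t)$: most pairs admit a direct edge, and only a small family of short routing problems remains to be solved inside the abundant surplus capacity. Making this trade-off precise, together with fixing the constant $200$ in the hypothesis, is where the real work lies.
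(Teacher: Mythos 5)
Your high-level plan (pass to a highly edge-connected subgraph, completely split off non-branch vertices via Mader--Lov\'asz to get a multigraph $M$ on $t$ branch vertices, route $K_t$ inside $M$) is genuinely different from the paper's proof, and Steps~1 and~2 are essentially sound (the paper also extracts many edge-disjoint spanning trees and passes to an Eulerian subgraph so that complete splitting is possible). But Step~3, which you correctly identify as the crux, has a real gap that linear minimum degree cannot fix as stated.

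First, an arithmetic point: with minimum degree $\Theta(t)$ in $M$, the multigraph has $\Theta(t^2)$ edges spread over $\binom{t}{2}=\Theta(t^2)$ pairs, so the \emph{average} multiplicity is $\Theta(1)$, not $\Theta(t)$ as you claim; there is no ``abundant surplus capacity'' coming for free. More seriously, the choice of branch vertices matters and cannot be arbitrary. Consider $G'$ consisting of two disjoint cliques $A$ and $B$, each on $100t$ vertices, joined by a $50t$-edge-connected bipartite graph with $50t$ crossing edges. If you choose $t/2$ branch vertices in each of $A$ and $B$, then every edge of $M$ joining an $A$-branch vertex to a $B$-branch vertex corresponds to a path in $G'$ crossing the $A/B$ cut, and these paths are edge-disjoint, so the cut in $M$ separating the two groups of branch vertices has at most $50t$ edges. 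But the demand across that cut is $(t/2)^2 = t^2/4$ pairs, each needing an edge-disjoint path crossing the cut. For $t > 200$ the cut condition for the multicommodity routing already fails, so \emph{no} immersion of $K_t$ exists in $M$, regardless of how cleverly you route. (The graph $G'$ of course does immerse $K_t$ --- just take all $t$ branch vertices inside one clique --- so the obstruction is entirely in where you placed the branch vertices.) Fixing this requires choosing the branch vertex set adaptively, which is exactly what the paper's inductive argument does. The paper's proof avoids multigraph routing entirely: it splits off one vertex at a time while keeping the graph \emph{simple} (modulo a tightly controlled region $A$ with a bounded amount of parallelism), tracks auxiliary sets $A,B$ and a parameter $s$, and invokes a density lemma for simple graphs (Lemma~\ref{density}) plus the Edmonds--Gallai theorem to either terminate or shrink the configuration. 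That density lemma is the real engine, and it is specific to simple graphs; once the graph becomes a genuine multigraph, as in your Step~2, density ceases to guarantee anything, which is precisely why the paper works so hard to preserve simplicity throughout the splitting process.
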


The proof of Theorem \ref{main} is found in Section 3 of this paper, following some preliminary results in Section 2. It is worth noting that a linear upper bound for the dense case (when $G$ has at least $cn^2$ edges) can also be proved using Szemer\'edi's Regularity Lemma (private communication by Jan Hladk\'y). However, in the dense case, we obtain a stronger result as we will now discuss.

Recall that a subdivision of a graph $H$ is a graph formed by replacing edges of $H$ by internally vertex disjoint paths. In the special case in which each of the paths replacing the edges of $H$ has exactly $k$ internal vertices, it is called a \DEF{$k$-subdivision} of $H$. The aforementioned result of Bollob\'as and Thomason \cite{BoTh} and Koml\'os and Szemer\'edi \cite{KoSz} implies that any $n$-vertex graph with $cn^2$ edges contains a topological copy of a complete graph on at least $c'\sqrt{n}$ vertices. An old question of Erd\H{o}s \cite{Er77} asks whether one can strengthen this statement and find in every graph $G$ on $n$ vertices with $cn^2$ edges a $1$-subdivision of a complete graph with at least $c'\sqrt{n}$ vertices for some positive $c'$ depending on $c$. After several partial results, this question was settled affirmatively by Alon, Krivelevich, and Sudakov \cite{AKS}, who showed that one can take $c'=\Omega(c)$. As shown in \cite{FoSu}, this bound is tight in the sense that for every $\epsilon>0$, this bound cannot be improved to $\Omega(c^{1-\epsilon})$.

A \DEF{$k$-immersion} of a graph $H$ into a graph $G$ is an immersion of $H$
into $G$ such that each of the paths has exactly $k$ internal vertices. For
example, a $0$-immersion of $H$ into a graph $G$ is simply a copy of $H$ as a
subgraph in $G$. In Section \ref{sect:dense} we prove the following theorem
which shows that every dense graph contains a $1$-immersion of a clique of
linear size. It shows that, over all graphs of a given order and size, the
order of the largest guaranteed clique $1$-immersion is about the square of the
order of the largest guaranteed clique $1$-subdivision.

\begin{theorem}
\label{main1immerse}
Let\/ $G$ be a simple graph on $n$ vertices and $2cn^2$ edges. Then there is a strong\/ $1$-immersion of the complete graph on at least $c^2 n$ vertices in $G$.
\end{theorem}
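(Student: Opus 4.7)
Write $m:=\lceil c^2 n\rceil$. The plan is to exhibit branch vertices $v_1,\dots,v_m\in V(G)$ together with, for each pair $i<j$, an internal vertex $w_{ij}\in N(v_i)\cap N(v_j)\setminus\{v_1,\dots,v_m\}$ such that for each fixed $i$ the vertices $w_{ij}$ (with $j\ne i$) are pairwise distinct. Because $G$ is simple, this distinctness condition is precisely what makes the length-two paths $v_iw_{ij}v_j$ edge-disjoint, and keeping each $w_{ij}$ outside the branch set promotes the immersion to a strong one. The opening ingredient is the convexity estimate
\begin{equation*}
\sum_{w\in V(G)}\binom{d(w)}{2}=\sum_{\{u,v\}\subseteq V(G)}|N(u)\cap N(v)|\ \ge\ n\binom{4cn}{2}\ \ge\ 8c^2n^3-O(n^2),
\end{equation*}
so the average value of $|N(u)\cap N(v)|$ over unordered pairs is at least $\approx 16c^2n=16m$, about eight times the threshold $2m$ that I will need below.

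The argument splits into two steps, and the assembly step is the easy one. Given a branch set $T\subseteq V(G)$ of size $m$ in which every pair $\{u,v\}\subseteq T$ satisfies $|N(u)\cap N(v)\setminus T|\ge 2m$, I would interpret the sought map $\{u,v\}\mapsto w_{uv}$ as a proper list edge coloring of $K_T$ where the edge $\{u,v\}$ draws its colour from the list $L_{uv}:=N(u)\cap N(v)\setminus T$. The line graph of $K_T$ has maximum degree $2(m-2)$, so the hypothesis $|L_{uv}|\ge 2m$ lets a plain greedy algorithm succeed: process the edges of $K_T$ in any order and, for each edge, pick a colour from its list that avoids the at most $2(m-2)$ colours already used at its two endpoints. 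The resulting paths $u\,w_{uv}\,v$ form a strong $1$-immersion of $K_m$.

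The heart of the proof is therefore the production of such a $T$. The first idea is random sampling with deletion: sample $T_0\subseteq V(G)$ uniformly among $(2m)$-subsets, call a pair $\{u,v\}\subseteq T_0$ \emph{deficient} if $|N(u)\cap N(v)\setminus T_0|<2m$, delete one endpoint from each deficient pair, and hope that the surviving set has size at least $m$. Conditional on $\{u,v\}\subseteq T_0$, each element of $N(u)\cap N(v)$ remains outside $T_0$ with probability $1-(2m-2)/(n-2)\approx 1-2c^2$, so $|L_{uv}|$ has expectation essentially equal to $|N(u)\cap N(v)|$; a Chebyshev estimate then shows that any pair whose $|N(u)\cap N(v)|$ exceeds a suitably large constant multiple of $m$ is unlikely to be deficient, reducing the task to bounding the number of pairs with much smaller common neighbourhood.

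The main obstacle is precisely this last bound, because the convexity average leaves open the possibility of many pairs with very few common neighbours (think of two near-cliques on disjoint vertex sets). To force a uniform lower bound I would preprocess by passing to a subgraph $G'\subseteq G$ of minimum degree at least $2cn$, obtained by iterated removal of low-degree vertices. In $G'$ every two vertices share at least $4cn-|V(G')|+2$ common neighbours, already $\ge 2m$ whenever $|V(G')|$ stays comparable to $cn$; and if $|V(G')|$ is substantially larger, then the edge density of $G'$ is proportionally higher, so reparametrising as an $n'$-vertex graph of density $c'=cn/(2n')\ge c/2$ and either applying Step~1 directly or iterating should close the gap (up to absolute constants in $c$, which can be absorbed at the start). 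Combining this reduction with the sampling argument then yields a $T$ suitable for the assembly step, completing the proof.
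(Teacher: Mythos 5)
Your assembly step is sound but demands strictly more than the situation allows, and the gap in producing the branch set is where the proof breaks.

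The assembly step works as stated: if every pair in $T$ has a list of size at least $2m$, a plain greedy edge-colouring of $K_T$ succeeds since the line graph of $K_m$ has maximum degree $2(m-2)$. But this requires a \emph{uniform} lower bound $|N(u)\cap N(v)\setminus T|\ge 2m$ for \emph{every} pair in $T$, and your production step cannot deliver it. The convexity estimate controls only the \emph{average} codegree $\approx 16m$; it says nothing about the spread. Consider $G$ a disjoint union of two cliques of size $n/2$. Then $c\approx 1/8$, and every pair of vertices in different cliques has \emph{zero} common neighbours, while $G$ already has minimum degree $\approx n/2-1\ge 2cn$. So the min-degree preprocessing returns $G'=G$ with $n'=n$, the bound $4cn-n'+2$ is nonpositive, and the reparametrised density is $c'=c/2<c$, making the new target $c'^2n'=c^2n/4$ \emph{smaller} than the original, not larger. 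Iterating this only compounds the loss, and since the theorem claims the explicit bound $c^2n$ (not $\Omega(c^2n)$), even a single lost constant factor is not admissible. More fundamentally, the approach never escapes the structural obstruction: sets $T$ in which every pair has large codegree simply need not exist at the required size with the required margin.

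The paper's proof sidesteps this obstacle with two ideas your proposal does not have. First, dependent random choice is applied to a bipartite subgraph $(A,B)$ of density $\ge c$: the set $X$ is the common neighbourhood of a random pair in $B$, and one controls the quantity $Y=\sum_{v\in X}w_X(v)$ with $w(u,v)=1/d(u,v)$ for $d(u,v)<2s$. This yields $U\subseteq A$ of size $s=\lceil c^2n\rceil$ in which the \emph{averaged} condition $w_U(v)<\tfrac12$ holds for every $v\in U$, which is much weaker than a uniform codegree lower bound and genuinely attainable. Second, and crucially, the greedy assembly processes pairs in \emph{increasing order of codegree}: when it is the turn of a pair with codegree $r$, the weight bound guarantees fewer than $r/2$ previously used edges at each endpoint, so a fresh common neighbour in $B$ is available even when $r<2s$. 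Your plain (order-agnostic) greedy cannot exploit this; it needs every list to be long, which is exactly the hypothesis that fails. To repair your argument you would need to replace the uniform codegree requirement with a small-weight requirement and switch to the codegree-ordered greedy — at which point you would essentially have reconstructed the paper's proof.
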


The proof is deferred to Section \ref{sect:dense}, where we also show that the dependence between the constants in the theorem is essentially best possible.

Paul Seymour recently proposed several problems regarding clique
minors in line graphs. Note that a clique minor in a line graph is
equivalent to finding edge-disjoint connected subgraphs (each with at least one edge) of the original
graph, each pair intersecting in at least one vertex. The following is a quick corollary of Theorem \ref{main}.

\begin{corollary}
\label{cor}
The line graph of any simple graph with average degree $d$ has a clique minor of
order at least $c d^{3/2}$, where $c>0$ is an absolute constant.
\end{corollary}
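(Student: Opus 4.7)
The plan is to pass to a high-minimum-degree subgraph, apply Theorem~\ref{main}, and build the clique minor in $L(G)$ from the resulting immersion combined with the large star cliques at the branch vertices.

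First, $G$ contains a subgraph $H$ with $\delta(H)\ge d/2$ by a standard peeling argument (iteratively removing vertices of degree below $d/2$). Applying Theorem~\ref{main} to $H$ yields a strong $K_t$-immersion with $t=\lfloor d/400\rfloor$, branch vertices $v_1,\dots,v_t$, and pairwise edge-disjoint paths $P_{ij}$ internally disjoint from the branch vertices. At each $v_i$, the star in $H$ induces a clique $S_i$ of size at least $200t$ in $L(G)$; at most $t-1$ of its vertices serve as ``exit edges'' of the immersion, so at least $199t$ free-star edges remain.

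The immersion alone already yields a $K_t$-minor in $L(G)$: splitting each $P_{ij}$ at an interior midpoint $w_{ij}$ and letting $B_i$ be the union of the $v_i$-halves of the $P_{ij}$'s produces connected spiders at the $v_i$'s, edge-disjoint in $H$ and pairwise sharing the midpoints. But this is only $K_{\Theta(d)}$, short of the target by a factor of $\sqrt{d}$. To bridge the gap, I would refine each $B_i$ into $k=\lfloor\sqrt{t}\rfloor$ sub-branch-sets $B^i_1,\dots,B^i_k$, yielding $kt=\Theta(d^{3/2})$ branch sets in total. Each $B^i_a$ takes a slice of roughly $200\sqrt{t}$ free-star edges at $v_i$ (partitioning the $\ge 199t$ free-star edges into $k$ classes), making same-branch-vertex adjacency $B^i_a\sim B^i_{a'}$ automatic through the common vertex $v_i$.

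The crux is cross-adjacency: for every $i\ne j$ and every $(a,b)\in[k]^2$, the sub-branch-sets $B^i_a$ and $B^j_b$ must share a vertex of $H$. For each pair $\{i,j\}$ the immersion supplies, besides the direct path $P_{ij}$, also the $t-2$ indirect routes $P_{ik}\cup P_{kj}$ through intermediate branch vertices $v_k$, giving $t-1\ge k^2$ edge-disjoint $v_i$-to-$v_j$ connections; one such connection is assigned to each cross-pair $(a,b)$, with appropriate interior vertices of the route added to both $B^i_a$ and $B^j_b$ to witness the shared vertex. The main obstacle is global edge-disjointness: an indirect route uses two immersion paths at once, each immersion path can be consumed only in one route, and all routes across all pairs $\{i,j\}$ must coexist without edge conflict. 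A counting argument on the routing budget, combined with a Latin-square-style schedule distributing the $t-1$ routes across the $k^2=t$ cross-pairs for each $\{i,j\}$, shows the construction fits, producing the desired $K_{\Theta(d^{3/2})}$-minor in $L(G)$.
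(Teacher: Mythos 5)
Your opening moves match the paper's: pass to a subgraph of minimum degree $\Omega(d)$, invoke Theorem~\ref{main} to get a strong $K_t$-immersion with $t=\Theta(d)$, and then try to manufacture $\Theta(t^{3/2})$ edge-disjoint, pairwise-intersecting connected subgraphs (which is exactly what a clique minor in a line graph amounts to). The gap is entirely in the last step, and it is a real gap, not a missing detail.

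Your plan partitions the branch sets by branch vertex: $B^i_a$ lives at $v_i$, and cross-adjacency between $B^i_a$ and $B^j_b$ is to be arranged by a route (direct $P_{ij}$ or indirect $P_{ik}\cup P_{kj}$). But this scheme over-commits the paths. For a fixed pair $\{i,j\}$ you need all $k^2=t$ cross-pairs $(a,b)$ to be adjacent, and you offer $t-1$ routes; numerically that almost fits, but those routes are not free: each indirect route $P_{ik}\cup P_{kj}$ re-uses paths that are also the direct route for $\{i,k\}$ and components of indirect routes for every other pair through $i$ or $j$. Concretely, a single path $P_{ik}$ is demanded by pair $\{i,k\}$ directly and by roughly $2(t-2)$ indirect routes, yet because the branch sets must be edge-disjoint subgraphs of $G$, $P_{ik}$ can be placed in only one (or, if split at an interior vertex, two) branch sets. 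Even if you let $B^i_a$ absorb $P_{ik}$ whole so that it reaches $v_k$ and thereby becomes adjacent to all $k$ of the $B^k_c$'s at once, each $B^i_a$ can own only about $(t-1)/k\approx\sqrt t$ of the paths $P_{ij}$ (since the $k$ sub-branch-sets at $v_i$ share them), so it reaches only $\sqrt t$ branch vertices and hence about $\sqrt{t}\cdot k=t$ other branch sets --- a factor of $\sqrt t$ short of the $(t-1)k\approx t^{3/2}$ required. Moreover, if you try to fix this by making the reach-sets $S^i_a$ behave like lines of a projective plane, you run into a direct conflict: the path $P_{ij}$ is then wanted simultaneously by the branch set at $v_i$ on the line through $i,j$ and the branch set at $v_j$ on the same line. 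A ``Latin-square schedule'' does not dissolve this; it is precisely the place where the real combinatorial content lives, and it is not supplied.

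The paper resolves this by centering the branch sets on \emph{lines} rather than on branch vertices, via Lemma~\ref{line1}. Take a projective plane of order $p$ with $t=p^2+p+1$: its $t$ lines partition the edge set of $K_t$ into $t$ cliques of size $p+1$, any two of which meet in exactly one point. By Nash--Williams, each $K_{p+1}$ (with $p$ odd) decomposes into $(p+1)/2$ edge-disjoint spanning trees. Pulling these trees back through the immersion (each edge of $K_t$ becoming a path $P_{uv}$) yields $t\cdot(p+1)/2\approx\tfrac12 t^{3/2}$ connected subgraphs of $G$ that are pairwise edge-disjoint by construction; two trees from the same line share every vertex of that line, and two trees from different lines share the branch vertex corresponding to the lines' unique common point. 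Because the trees of a line are arbitrary spanning trees rather than stars rooted at a single vertex, the competition for individual paths that sinks your scheme never arises. If you want to salvage your write-up, replacing the ``refine each spider'' step with the projective-plane-plus-Nash--Williams decomposition of the clique $K_t$ is the fix; without some design-theoretic input of this kind the cross-adjacency bookkeeping does not close.
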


The proof of Corollary \ref{cor} is given in Section \ref{sect:6}, where we also show that
this result is best possible up to the constant~$c$.

\section{Preliminary results}

In this section we outline some preliminary lemmas to be used in the proof of our
main theorem.

\begin{lemma}
\label{bip-im}
If\/ $a \le b$ then the complete bipartite graph $K_{a,b}$ contains a strong $K_a$-immersion.
\end{lemma}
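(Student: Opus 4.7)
The plan is to produce a strong $K_a$-immersion in which every path has length exactly $2$. Let $(A,B)$ be the bipartition of $K_{a,b}$ with $|A|=a$ and $|B|=b\ge a$, and declare the vertices of $A$ to be the branch vertices. Since $K_{a,b}$ is bipartite and the paths of a strong immersion must avoid $A$ internally, each path $P_{uv}$ must have its unique internal vertex in $B$; that is, $P_{uv}$ is of the form $u\,w\,v$ for some $w\in B$.

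Thus the task reduces to choosing, for every pair $\{u,v\}\subseteq A$, a ``hub'' vertex $w_{uv}\in B$ so that the edges $\{u w_{uv},\, v w_{uv}\}_{\{u,v\}}$ are pairwise distinct. For a fixed $w\in B$, two pairs sharing $w$ as their hub use disjoint edges at $w$ precisely when those two pairs are disjoint as subsets of $A$; equivalently, the set of pairs assigned to $w$ must form a matching on $A$. So the question becomes: can we partition the edge set of the complete graph on $A$ into at most $b$ matchings?

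The answer is immediate from the classical edge-colouring of complete graphs: $K_a$ has chromatic index $a-1$ when $a$ is even and $a$ when $a$ is odd, so its edges partition into at most $a$ matchings $M_1,\dots,M_k$ with $k\le a\le b$. Pick distinct vertices $w_1,\dots,w_k\in B$, and for each edge $\{u,v\}\in M_i$ set $P_{uv}:=u\,w_i\,v$. Each $P_{uv}$ is internally disjoint from $A$, so the immersion is strong. Edge-disjointness holds because two paths through distinct hubs trivially share no edge, while two paths through the same $w_i$ correspond to disjoint edges of $M_i$ and therefore use four distinct edges at $w_i$.

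There is no real obstacle: the only thing to notice is that the pairwise edge-disjointness condition on length-$2$ paths through $B$ is exactly the condition that the pairs sharing a hub form a matching, and that a proper edge-colouring of $K_a$ supplies such a partition using at most $a\le b$ colour classes.
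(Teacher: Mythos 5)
Your proof is correct and uses essentially the same idea as the paper: properly edge-colour $K_a$ with at most $a\le b$ colours, assign each colour class (a matching on $A$) to a distinct vertex of $B$, and route every pair through its assigned hub. The paper phrases this via the split-off operation while you phrase it directly in terms of length-two paths, but the underlying construction is identical.
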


\begin{proof}
Let the bipartition of $K_{a,b}$ be $(A,B)$ with $|A| = a$ and $|B| = b$ and consider a complete graph on the vertices of $A$ which is properly edge-col{ou}red using at most $a$ col{ou}rs.
Such a col{ou}ring is easy to construct. Now, associate each col{ou}r with a vertex in $B$ and
then split off pairs of edges incident with this vertex to add the matching on $A$ consisting of the edges of its associated col{ou}r.
\end{proof}

In what follows, we let $\bar{d}(G)$ denote the average degree of the graph $G$.
If $v\in V(G)$, then we denote by $N(v)$ the set of neighb{ou}rs of $v$ in $G$.

\begin{lemma}
\label{lem:split1}
Let\/ $G$ be a simple $n$-vertex graph with no subgraph isomorphic to $K_q$.
If $u \in V(G)$ has ${\mathit deg}(u) \le \bar{d}(G)$, then there exists a simple graph $G'$ obtained by splitting the vertex $u$ so that\/ $\bar{d}(G') \ge \bar{d}(G) - \frac{q-2}{n-1}$.
\end{lemma}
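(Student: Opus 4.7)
The plan is to split off the edges at $u$ in such a way that as few edges as possible are lost, and then control $\bar d(G')$ by elementary arithmetic. Write $d = \deg(u)$, $N = N(u)$, and $H = G[N]$. Because $u$ is adjacent to every vertex of $N$ and $G$ is $K_q$-free, $H$ cannot contain $K_{q-1}$; this is the only structural fact I shall use about $G$.

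The crux is a matching argument inside the complete graph on $N$. Let $\overline H$ denote the complement of $H$ in this complete graph, and pick a maximum matching $M$ in $\overline H$. Its uncovered set $U \subseteq N$ spans no edge of $\overline H$ (otherwise $M$ could be enlarged), so $U$ is a clique in $H$; the $K_{q-1}$-freeness of $H$ then forces $|U| \le q-2$, and hence $|M| \ge (d - q + 2)/2$. I now form $G'$ by splitting off, for each $v_iv_j \in M$, the pair $uv_i, uv_j$: because $v_iv_j \notin E(H)$, the split produces a genuinely new edge rather than a parallel copy. The remaining $d - 2|M| \le q - 2$ edges at $u$ are split off into arbitrary pairs with any duplicates discarded to preserve simplicity, and if $d$ is odd the one leftover edge is dropped together with $u$. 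The resulting $G'$ is simple on $n-1$ vertices and has at least $|E(G)| - d + |M|$ edges.

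Finishing the proof is then a short computation: from $|M| \ge (d - q + 2)/2$ and $d \le \bar d(G) = 2|E(G)|/n$ one obtains $2|E(G')| \ge 2|E(G)|(n-1)/n - (q-2)$, and dividing by $n-1$ yields $\bar d(G') \ge \bar d(G) - (q-2)/(n-1)$. The only genuine content is the maximality/clique exchange between $M$ and $H$; the rest is bookkeeping, and I anticipate no real obstacle.
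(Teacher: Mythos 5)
Your proposal is correct and follows essentially the same route as the paper: a maximal matching in the complement of $G[N(u)]$, whose uncovered vertices form a clique of size at most $q-2$ by $K_q$-freeness, followed by splitting off along the matching and discarding the few remaining edges at $u$. The only cosmetic difference is that you do the final accounting by edge counts while the paper sums degrees of the vertices other than $u$; the two computations are equivalent.
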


\begin{proof}
Let $H$ be the complement of the graph induced on $N(u)$ and choose a maximal
matching $M$ in $H$.  Let $X$ be the subset of $N(u)$ not covered by this matching.  Next, we form a simple graph
$G'$ by splitting off pairs of edges incident with the vertex $u$ to add the new edges $M$, and delete the edges from $u$ to vertices in $X$.  Now, $X$ is
an independent set in $H$, so it is a clique in $G$ joined completely to $u$.  Thus $|X| \le q-2$ and we find that
\begin{align*}
 \bar{d}(G') &= \frac{1}{n-1} \sum_{v \in V(G) \setminus \{u\}} {\mathit deg}_{G'}(v)\\
 &\ge \frac{1}{n-1} \sum_{v \in V(G) \setminus \{u\}} {\mathit deg}_{G}(v) - \frac{q-2}{n-1}\\
 &\ge \bar{d}(G) - \frac{q-2}{n-1}
\end{align*}
as desired.
\end{proof}

Suppose that $v\in V(G)$ and that we split off several pairs of edges incident with
$v$ and then delete the vertex $v$. Then we say that
we have \DEF{split} the vertex $v$. Let us observe that if the graph obtained from $G$ after splitting a vertex contains a strong immersion of a graph $H$, then so does $G$.

In the following lemma, $\log(\cdot)$ denotes the natural logarithm.

\begin{lemma}
\label{density}
Let\/ $G$ be a simple $n$-vertex graph with $m$ edges.
Then $G$ contains a strong immersion of\/ $K_q$,
where $q = \Bigl\lfloor \frac{m}{n \log (n^2/m)} + \tfrac{1}{3} \,  \Bigr\rfloor $.
\end{lemma}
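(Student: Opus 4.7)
The plan is to assume for contradiction that $G$ has no strong $K_q$-immersion and to derive a contradiction by iterated splitting using Lemma~\ref{lem:split1}. The cases $q \le 2$ are trivial, so assume $q \ge 3$. By the observation preceding Lemma~\ref{lem:split1}, splitting a vertex preserves the absence of a strong $K_q$-immersion, and in particular of a $K_q$-subgraph, so Lemma~\ref{lem:split1} applies throughout. Starting from $G_0 := G$, I iteratively split a vertex of minimum degree in $G_i$ (whose degree is at most $\bar{d}(G_i)$), producing simple graphs $G_0, G_1, \ldots, G_{n-q+1}$ with $|V(G_i)| = n - i$ satisfying $\bar{d}(G_{i+1}) \ge \bar{d}(G_i) - (q-2)/(n-i-1)$. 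Since $G_{n-q+1}$ has $q-1$ vertices, $\bar{d}(G_{n-q+1}) \le q - 2$, and telescoping yields
\[
\bar{d}(G) \;\le\; (q-2) + (q-2)\sum_{j=q-1}^{n-1}\frac{1}{j} \;\le\; (q-2)\!\left(1 + \log\frac{n-1}{q-2}\right).
\]

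For the contradiction, the defining property $q \le \frac{m}{n\log(n^2/m)} + \frac{1}{3}$ rearranges to $\bar{d}(G) \ge (2q - \frac{2}{3})\log(2n/\bar{d}(G))$. Combining with the upper bound on $\bar{d}(G)$ and exponentiating gives an inequality of the form $2n \le (q-2)\, A\, e^{vA}$ with $A := 1 + \log\frac{n-1}{q-2}$ and $v := \frac{q-2}{2q - 2/3} < \frac{1}{2}$. Setting $y := (n-1)/(q-2) \ge 1$ and using $e^{vA} = e^v y^v$, this simplifies to the claim $e^v(1 + \log y) \ge \frac{2n}{n-1}\, y^{1-v}$. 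But the standard calculus inequality $1 + \log y \le 2\sqrt{y/e}$ (valid for all $y > 0$), together with $e^{v-1/2} \le 1 \le y^{1/2 - v}$ (which holds because $v < \tfrac{1}{2}$ and $y \ge 1$), gives $e^v(1 + \log y) \le 2 y^{1-v} < \frac{2n}{n-1}\, y^{1-v}$, contradicting the required lower bound.

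The main obstacle is the final algebraic reduction, not the splitting itself. The precise coefficient $\frac{1}{3}$ in the statement is what makes $v = \frac{q-2}{2q - 2/3}$ strictly less than $\frac{1}{2}$; at $v = \frac{1}{2}$ and $y = e$ the key inequality $e^v(1 + \log y) \le 2 y^{1-v}$ becomes an equality, so without this slack the argument would fail at the critical point.
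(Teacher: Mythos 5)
Your proof is correct and uses the same core technique as the paper: assume for contradiction no strong $K_q$-immersion, iterate Lemma~\ref{lem:split1}, and compare the resulting average degree against the trivial upper bound for a simple graph on few vertices. The only difference in execution is the stopping point---the paper halts once $\lceil m/n\rceil + 1$ vertices remain, which collapses the final arithmetic to the single observation $\tfrac{5}{3}\log(n^2/m)\ge 1$, whereas you continue all the way down to $q-1$ vertices, which gives the telescoped bound $\bar d(G)\le (q-2)\bigl(1+\log\tfrac{n-1}{q-2}\bigr)$ and requires the auxiliary calculus inequality $1+\log y\le 2\sqrt{y/e}$ to close; both routes correctly exploit the $\tfrac13$ slack in the definition of $q$.
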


\begin{proof}
We shall repeatedly apply Lemma \ref{lem:split1} to split vertices until we have only
$\lceil m/n \rceil + 1$ vertices remaining.  If we encounter a graph in this process which contains a $K_q$ subgraph, then we are finished.  Otherwise, the average degree of the resulting graph $H$ satisfies
\begin{align*}
  \bar{d}(H)
    &\ge \frac{2m}{n} - \frac{q-2}{n-1} - \frac{q-2}{n-2} - \cdots - \frac{q-2}{\lceil m/n \rceil + 1}\\
    &> \frac{2m}{n} - (q-2)\int_{\lceil m/n \rceil}^{n-1} \frac{1}{x}\,dx\\[1mm]
%    &\ge \frac{2m}{n} - (q-2) \log\Bigl(\frac{n-1}{m/n}\Bigr) \\
    &\ge \frac{2m}{n} - (q-2) \log (n^2/m) \\
    &\ge \frac{m}{n} + \frac{5}{3} \log(n^2/m)
    \ge \frac{m}{n} + 1
\end{align*}
(here the last inequality follows from $n^2 \ge 2m$ since $G$ is simple).  But this is a contradiction since the (simple) graph $H$ has only $\lceil m/n\rceil + 1$ vertices.
\end{proof}

The {\it edge-cut} $\delta(X)$ for a vertex subset $X$ of a graph is the set of edges with exactly one vertex in $X$.

\begin{lemma}
Every graph of minimum degree $2k$ contains a subgraph $H$ so that $H$ has $k$ edge-disjoint spanning trees.
\end{lemma}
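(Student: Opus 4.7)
The plan is to apply the Nash-Williams--Tutte theorem on edge-disjoint spanning trees: a multigraph $F$ contains $k$ edge-disjoint spanning trees if and only if for every partition of $V(F)$ into $r \geq 1$ nonempty parts, the number of edges with endpoints in distinct parts is at least $k(r-1)$. My approach will be to pick $H$ to be a minimum-order subgraph satisfying a simpler average-density inequality, and then verify the sharper Nash-Williams--Tutte partition condition on $H$ by a minimum-counterexample argument.

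First I would note that $\delta(G) \geq 2k$ yields $2|E(G)| \geq 2k|V(G)|$, hence $|E(G)| \geq k|V(G)| \geq k(|V(G)|-1)$. Thus $G$ itself already satisfies the density inequality $|E(F)| \geq k(|V(F)| - 1)$, and I may choose a subgraph $H \subseteq G$ with $|V(H)| \geq 2$ and $|E(H)| \geq k(|V(H)|-1)$ such that $|V(H)|$ is as small as possible.

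To verify the Nash-Williams--Tutte condition on $H$, I would suppose for contradiction that some partition $V(H) = V_1 \cup \cdots \cup V_r$ into $r \geq 2$ nonempty parts has at most $k(r-1)-1$ crossing edges. Then the remaining edges inside parts satisfy
$$\sum_{i=1}^r |E(H[V_i])| \; \geq \; |E(H)| - (k(r-1)-1) \; \geq \; k(|V(H)|-1) - k(r-1) + 1 \; = \; k\sum_{i=1}^r (|V_i|-1) + 1.$$
Consequently some index $i$ gives $|E(H[V_i])| \geq k(|V_i|-1)+1$, which forces $|V_i| \geq 2$, while $r \geq 2$ forces $|V_i| < |V(H)|$. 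But then $H[V_i]$ is a strictly smaller subgraph of $G$ obeying the density inequality, contradicting the choice of $H$. Hence $H$ meets the Nash-Williams--Tutte condition, and therefore contains $k$ edge-disjoint spanning trees.

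I do not anticipate any real obstacle here: the only nontrivial ingredient is the Nash-Williams--Tutte theorem, which is used as a black box, and everything else is a short extremal argument of a standard flavor. A minor point to keep in mind is that the lemma allows loops and parallel edges, but neither affects the argument, since loops can be discarded (they play no role in spanning trees) and parallel edges are fully accommodated by the Nash-Williams--Tutte framework.
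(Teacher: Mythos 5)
Your proof is correct, but it follows a genuinely different route from the paper's. The paper extracts a vertex set $X$ that is inclusion-wise minimal among nonempty sets with $|\delta_G(X)|<2k$ (or $X=V(G)$ if no small cut exists), sets $H=G[X]$, and then verifies the Nash-Williams--Tutte partition condition by a cut-counting argument: if a partition of $X$ into $t\geq 2$ blocks had fewer than $k(t-1)$ crossing edges, then $\sum_i|\delta_G(X_i)|<2k(t-1)+|\delta_G(X)|\le 2kt$, so some block would have a small cut, contradicting the minimality of $X$. You instead minimize $|V(H)|$ subject to the density inequality $|E(H)|\ge k(|V(H)|-1)$ and verify Nash-Williams--Tutte by averaging the internal edge counts over the parts. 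Both invoke the same black box, and both are short; the paper's version additionally establishes that $H$ inherits a weak $2k$-edge-connectivity property (no nonempty proper subset of $X$ has a cut below $2k$), which is a slightly stronger structural conclusion than the density-minimal $H$ you produce, though that extra structure is not used later. Your density-minimal approach is the more standard way to prove this type of statement and is arguably cleaner.

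One small caveat worth tightening: your claim that ``$|E(H[V_i])|\ge k(|V_i|-1)+1$ forces $|V_i|\ge 2$'' relies on $H$ having no loops (a singleton with loops could have $|E(H[V_i])|\ge 1$). You do flag that loops can be discarded, but note that discarding loops from $G$ may lower degrees and hence may invalidate the opening inequality $|E(G)|\ge k|V(G)|$ if there are many loops. The cleanest fix is to restrict the minimization to loopless subgraphs $H$ from the outset while still computing the initial bound $2|E(G)|\ge 2k|V(G)|$ with loops included; alternatively, simply observe that in the paper's application $G$ is simple, so the issue never arises. The paper's cut-based argument sidesteps this entirely since loops never appear in any $\delta(\cdot)$.
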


\begin{proof}
If there exists an edge-cut of size less than $2k$ in $G$,
then choose such an edge-cut $\delta(X)$ so that $X$ is
minimal.  Otherwise, we take $X = V(G)$, and in either case we set $H$ to be the subgraph induced by $X$.  Suppose (for a contradiction) that
$H$ does not have $k$ edge-disjoint spanning trees.  Then it follows from a theorem of Nash-Williams \cite{NW} and Tutte \cite{Tu} that there exists a partition of $X$ into $t \geq 2$ blocks, say $\{X_1,\ldots,X_t\}$, so that the number of edges with ends in
distinct blocks of this partition is less than $k(t-1)$.  But then we find that
\begin{equation*}
\sum_{i=1}^t | \delta(X_i) |
	< 2k(t-1) + |\delta(X)|
	\le 2kt,
\end{equation*}
so there exists $1 \le i \le t$ so that $|\delta(X_i)| < 2k$.  This contradicts our choice of $X$, so we conclude that $H$ contains
$k$ edge-disjoint spanning trees.
\end{proof}

\begin{lemma}
\label{eul-sg}
Let $G$ be a simple graph of minimum degree $4k$. Then $G$ contains an Eulerian subgraph with minimum degree at least\/ $2k$.
\end{lemma}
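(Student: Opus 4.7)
The plan is to combine the preceding lemma with a simple $T$-join construction inside a spanning tree. Applying the preceding lemma with its $k$ replaced by $2k$ (legitimate since $G$ has minimum degree $4k$), we extract a subgraph $H\subseteq G$ containing $2k$ edge-disjoint spanning trees $T_1,\ldots,T_{2k}$. Because each $T_i$ contributes at least $1$ to every vertex-degree of $H$, the minimum degree of $H$ is already at least $2k$; the remaining task is to prune $H$ to make all degrees even without losing too much at any vertex.

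The main step is to pair the trees as $(T_1,T_2),(T_3,T_4),\ldots,(T_{2k-1},T_{2k})$ and, for each index $i$, produce a spanning subgraph $F_i$ of $U_i:=T_{2i-1}\cup T_{2i}$ with all even degrees and minimum degree at least $2$. To build $F_i$, let $O_i$ denote the (necessarily even-size) set of odd-degree vertices of $U_i$, and let $J_i\subseteq E(T_{2i-1})$ be the unique $O_i$-join inside the spanning tree $T_{2i-1}$, namely the set of edges $e$ for which one component of $T_{2i-1}-e$ meets $O_i$ in an odd number of vertices. Setting $F_i:=U_i\setminus J_i$, a parity check shows that $\deg_{F_i}(v)=\deg_{U_i}(v)-\deg_{J_i}(v)$ is even for every $v$ (both summands have the same parity as $\mathbf{1}[v\in O_i]$), and $J_i\subseteq E(T_{2i-1})$ gives $\deg_{J_i}(v)\le\deg_{T_{2i-1}}(v)$, so $\deg_{F_i}(v)\ge\deg_{T_{2i}}(v)\ge 1$; combined with evenness, this forces $\deg_{F_i}(v)\ge 2$.

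Finally, since distinct pairs of trees are edge-disjoint, the subgraphs $F_1,\ldots,F_k$ are pairwise edge-disjoint, so $F:=F_1\cup\cdots\cup F_k$ is a subgraph of $G$ with $\deg_F(v)=\sum_i\deg_{F_i}(v)\ge 2k$ and all degrees even (a sum of even numbers). If \emph{Eulerian} is taken to additionally require connectedness, one passes to any connected component of $F$, whose internal degrees coincide with those in $F$ and hence remain at least $2k$. The only real obstacle is the parity-fixing step: verifying that the tree $O_i$-join $J_i\subseteq T_{2i-1}$ is uniquely determined by $O_i$ and that removing it from $U_i$ fixes every parity while protecting the minimum degree via $T_{2i}$; this is a standard fact about $T$-joins in trees, so no new ideas beyond the preceding lemma are needed.
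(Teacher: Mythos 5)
Your proof is correct, but it takes a genuinely different route from the paper's. The paper also begins by extracting $2k$ edge-disjoint spanning trees $T_1,\ldots,T_{2k}$, but then forms the symmetric difference of the fundamental cycles $C_e$ (taken with respect to $T_1$) over all $e\in E(H)\setminus E(T_1)$; since each non-$T_1$ edge lies in exactly one such cycle, the resulting even subgraph contains all of $T_2,\ldots,T_{2k}$, giving minimum degree $\ge 2k-1$, which parity pushes up to $2k$. You instead pair the trees, and within each pair $T_{2i-1}\cup T_{2i}$ delete the unique $O_i$-join inside $T_{2i-1}$ to repair parity while leaving $T_{2i}$ untouched, so each pair contributes an even subgraph of minimum degree $\ge 2$ and the $k$ pairs sum to $\ge 2k$. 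Both are one-shot parity-repair arguments on top of the same spanning-tree lemma; the paper's is a bit more compact, handling all $2k$ trees in a single symmetric-difference step, while yours makes the evenness and the degree bound transparent pairwise and locally via a standard $T$-join fact. Both proofs implicitly interpret ``Eulerian'' as ``all degrees even'' (consistent with how the paper uses it later), and your remark that passing to a connected component preserves degrees covers the stricter reading.
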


\begin{proof}
It follows from the previous lemma that we may choose a subgraph $H$ of $G$ so that $H$ contains $2k$ edge-disjoint spanning trees
$T_1, T_2, \ldots, T_{2k}$.  For every edge $e \in E(H) \setminus E(T_1)$ let $C_e$ be the fundamental cycle of $e$ with respect to $T_1$,
and let $S$ be the set of edges obtained by taking the symmetric difference of all of the cycles $C_e$ for $e \in E(H) \setminus E(T_1)$.
Define $H'$ to be the subgraph of $H$ with edge set $S$. Since a cycle is an Eulerian graph, and the symmetric difference of Eulerian graphs is an Eulerian graph, the graph $H'$ is Eulerian. Furthermore, $S$ contains all edges in
$E(H)\setminus E(T_1) \supseteq \cup_{j=2}^{2k} E(T_j)$. Since each of
the trees $T_j$ contributes at least one to the degree of each vertex,
$H'$ has minimum degree at least $2k-1$.  Since every degree is even and $2k-1$ is odd, we conclude that $H'$ is an
Eulerian graph with minimum degree at least $2k$, as required.
\end{proof}

\section{Proof of Theorem \ref{main}}

By using the auxiliary results from the previous section, we will be able to start the proof
with a simple Eulerian graph $G$ with minimum degree at least $100t$.
The main idea of the proof is to split off all edges incident with a vertex. If the complement of the neighb{ou}rhood of that
vertex has a perfect matching, the splitting can be done in such a way that the graph remains simple and has
minimum degree at least $100t$.
However, if this is not the case, any complete splitting of the vertex gives rise to some parallel edges. We show that
one can keep all parallel edges (with an exception of a single double edge) being contained in a small subset $A$ of
vertices which is completely joined to a relatively large vertex set $B$. When doing so, we either find a
$K_t$-immersion in $G$, or produce a splitting in which something improves. Our goal is to either make the set $A$
empty, or make $A$ larger and $B$ smaller and ``denser''. To control the
``density'' of $B$, we keep track of the previous splittings that give rise to large disjoint matchings in $B$, and henceforth certify that there are
many edges in $B$. Formally, this is controlled with a third parameter $s$ (the first two parameters
being $a=|A|$ and $b=|B|$), denoting how many times we have made a splitting of a vertex
in which parallel edges may have occurred.

In addition to the lemmas from the previous section, our proof requires the following consequence of the Edmonds-Gallai structure theorem for matchings, see \cite{LP}.  We say that a graph is \emph{hypomatchable} if removing any vertex results in a graph with a perfect matching, and we let
${\mathit comp}(G)$ $({\mathit oddcomp}(G))$ denote the number of components (components of odd order) of the graph $G$.

\begin{theorem}[Edmonds-Gallai]
\label{ed-gal}
For every graph\/ $G$ without a perfect matching, there exists $X \subseteq V$ so that every component of\/ $G-X$ of odd order is
hypomatchable and so that ${\mathit oddcomp}(G-X) > |X|$.
\end{theorem}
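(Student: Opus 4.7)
The plan is to derive the theorem from Tutte's $1$-factor theorem, which asserts that $G$ has a perfect matching if and only if ${\mathit oddcomp}(G - X) \le |X|$ for every $X \subseteq V(G)$. Since $G$ has no perfect matching, some set violates this inequality, so the quantity $\Phi(X) := {\mathit oddcomp}(G - X) - |X|$ attains a positive maximum over $X \subseteq V(G)$. First I would fix $X$ to be a maximizer of $\Phi$, and among all maximizers select one of \emph{largest cardinality}. The inequality ${\mathit oddcomp}(G - X) > |X|$ is then immediate from this choice.

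The substance of the argument is showing that every odd component of $G - X$ is hypomatchable. Suppose for contradiction that some odd component $K$ of $G - X$ has a vertex $v \in V(K)$ for which $K - v$ admits no perfect matching. Applying Tutte's theorem inside $K - v$ yields a set $Y \subseteq V(K) \setminus \{v\}$ with ${\mathit oddcomp}((K - v) - Y) > |Y|$. Since $|K|$ is odd, $|K - v - Y|$ has the same parity as $|Y|$; because the number of odd components of a graph has the same parity as its order, this forces ${\mathit oddcomp}((K - v) - Y) \equiv |Y| \pmod{2}$, and the strict inequality is upgraded to ${\mathit oddcomp}((K - v) - Y) \ge |Y| + 2$.

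Now set $X' = X \cup \{v\} \cup Y$. The components of $G - X'$ that lie outside $K$ are identical to those of $G - X$, while the single component $K$ is replaced by the components of $K - v - Y$, of which at least $|Y| + 2$ are odd. Therefore
$${\mathit oddcomp}(G - X') \;\ge\; {\mathit oddcomp}(G - X) - 1 + (|Y| + 2) \;=\; {\mathit oddcomp}(G - X) + |Y| + 1,$$
while $|X'| = |X| + |Y| + 1$, so $\Phi(X') \ge \Phi(X)$. Thus $X'$ is also a maximizer of $\Phi$, yet $|X'| > |X|$, contradicting the choice of $X$. The main technical point is the parity upgrade that boosts ${\mathit oddcomp}((K-v) - Y) > |Y|$ to $\ge |Y| + 2$; once this is in hand, the extremal choice of $X$ turns the rest into routine bookkeeping.
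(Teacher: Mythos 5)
Your proof is correct. Note that the paper does not actually prove this statement: it cites it as a known form of the Edmonds--Gallai structure theorem, with a reference to Lov\'asz and Plummer's \emph{Matching Theory} \cite{LP}, so there is no proof in the paper to compare against. Your derivation from Tutte's $1$-factor theorem is the standard one: choose $X$ maximizing the deficiency $\Phi(X) = {\mathit oddcomp}(G-X) - |X|$ (which is positive somewhere by Tutte), and among maximizers take one of largest cardinality. The positivity of $\Phi(X)$ gives ${\mathit oddcomp}(G-X) > |X|$ directly, and the parity upgrade from ${\mathit oddcomp}((K-v)-Y) > |Y|$ to $\ge |Y|+2$ (valid because the number of odd components of any graph is congruent to its order modulo $2$, and $|K|$ is odd) is exactly what makes the extremal exchange $X \mapsto X \cup \{v\} \cup Y$ preserve $\Phi$ while strictly increasing $|X|$, yielding the contradiction. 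The bookkeeping ($v$ and $Y$ are disjoint from $X$, the components outside $K$ are untouched) checks out. This is a clean, self-contained argument for precisely the fragment of Gallai--Edmonds theory that the paper needs.
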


The following lemma, which will be used to derive our main theorem, formalizes the outline of
the proof as explained at the beginning of this section.

\begin{lemma}
Let\/ $G$ be an Eulerian graph with minimum degree $\ge 100t$, let $s \ge 0$ be an integer, and let $A,B \subseteq V(G)$. Let\/ $a = |A|$ and $b = |B|$ and assume that the following properties hold:
\begin{enumerate}
%[label={\rm (\roman{*}) \ }, ref={\rm (\arabic{*})}]
\item[\rm (i)] $A \cap B = \emptyset$ and every vertex in $A$ is adjacent to every vertex in $B$.
\item[\rm (ii)] Every loop has its vertex in $A$, and all but at most one parallel class of edges have both ends in $A$.
\item[\rm (iii)] If there is a parallel class of edges without both ends in $A$, then it has size two, has one end in $A$, and the other end has at most $50t$ neighb{ou}rs in~$B$.
\item[\rm (iv)] There exist $s$ edge-disjoint matchings in $B$ each with size $\ge b - 53t$.
\item[\rm (v)] $2a + b + 2s \ge 100t$.
\item[\rm (vi)] $a > 0$ and\/ $72t \le b \le 100t$.
\end{enumerate}
Then $G$ contains a strong immersion of $K_t$.
\end{lemma}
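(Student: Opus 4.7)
I would argue by induction on the triple $(b,-a,|V(G)|)$ in lexicographic order, with base case $a \ge t$. In the base case, pick subsets $A' \subseteq A$ and $B' \subseteq B$ of size $t$ each, which exist since $b \ge 72t \ge t$. Because every vertex of $A$ is joined to every vertex of $B$, the induced bipartite subgraph on $A' \cup B'$ contains $K_{t,t}$, so Lemma~\ref{bip-im} yields a strong $K_t$-immersion on $A'$ with length-$2$ paths through $B'$; these paths are internally disjoint from $A'$, so the immersion is strong in $G$ as well.

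For the inductive step, assume $a < t$. Pick a vertex $v \notin A$ of even degree $\ge 100t$ (such a vertex exists because $G$ is Eulerian with minimum degree $\ge 100t$) and attempt to split off all edges at $v$. A splitting amounts to a perfect matching on the $\deg(v)$ edge-ends at $v$, and we forbid pairs $\{u_1,u_2\}$ that would produce a new parallel edge violating (ii) or (iii). Apply the Edmonds--Gallai Theorem (Theorem~\ref{ed-gal}) to the complement of the induced ``conflict'' graph $F$ on the multiset of edge-ends at $v$. If a perfect matching exists, execute this clean splitting: $v$ is deleted, the graph remains Eulerian with minimum degree $\ge 100t$, the hypotheses transfer to the smaller graph, and the induction measure strictly decreases (either $b$ drops by one if $v \in B$, or $|V(G)|$ drops otherwise). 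Otherwise, Edmonds--Gallai yields a blocking set $X$ with ${\mathit oddcomp}(F-X) > |X|$ and hypomatchable odd components; carry out the splitting so that all unavoidable new parallel edges are incident to $X$, set $A' := A \cup X$, $B' := B \setminus (X \cup \{v\})$, observe that the cleanly split pairs in $B'$ form a new matching of size about $50t$, and prepend it to the matching list of (iv), yielding $s' := s+1$.

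The main obstacle is the precise quantitative accounting. To invoke induction in the Edmonds--Gallai case, one must verify $b' \ge 72t$, i.e., $|X| \le b - 72t - 1$, and that the newly prepended matching in $B'$ has size $\ge b' - 53t$. Both of these follow from combining $\deg(v) \ge 100t$ with the Edmonds--Gallai inequality $|X| < {\mathit oddcomp}(F-X)$ and the hypomatchable structure of the odd components, but the tight constants $100t$, $72t$, $53t$, $50t$ leave essentially no slack. The most delicate sub-case is when the splitting generates exactly one parallel class not contained in $A'$: this is the exception permitted by property (iii), and showing that its ``far'' endpoint has at most $50t$ neighbors in $B'$ requires careful use of the hypomatchable component structure, and is where the balance of constants bites hardest.
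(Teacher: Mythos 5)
Your proposal departs from the paper's proof at a fundamental point: you split a vertex $v \notin A$, whereas the paper always splits a vertex $u \in A$. That choice is not cosmetic. Because every vertex of $A$ is joined to every vertex of $B$, if $v \in B$ then any pairing of an edge $va$ (with $a \in A \cap N(v)$) against an edge $vb$ (with $b \in B \cap N(v)$) creates a parallel edge $ab$ with one end in $A$ and one in $B$. Property (ii) allows at most one such parallel class, so essentially all $a \cdot |B \cap N(v)|$ such pairs are ``conflicts,'' and your conflict graph $F$ is dense in a way that makes the complement's matching structure much harder to control. The paper sidesteps this entirely: it only needs a matching in the complement of $G[N(u)\setminus A]$, because edges from $u$ into $A$ may be split off arbitrarily (parallels inside $A$ are tolerated by hypothesis (ii)). Splitting $u\in A$ also removes $u$ from $A$, which is what makes $|V(G)|$ and the nuisance set $A$ shrink together; your operation never shrinks $A$.

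There is also a concrete invariant bug. In your ``clean splitting'' branch you delete $v$ and claim the hypotheses transfer. But if $v \in B$, then $b$ drops by $1$ while $a$ and $s$ are unchanged, so $2a + b + 2s$ drops below $100t$ and hypothesis (v) fails. The paper avoids this by always setting $s' = s+1$ when it performs a split (claims (1) and (2)), which adds $2$ to the sum and compensates; and when it does not split (the Edmonds--Gallai repartitioning step) it verifies $|Y| \ge |X'|$ precisely so that $2a' + b' + 2s' \ge 2a + b + 2s$. You would need a similar accounting.

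Finally, your sketch collapses the paper's three cases into two. The paper handles separately (1) $H$ has a perfect matching, (2) $H$ is hypomatchable, and then (3) the Edmonds--Gallai blocking set exists with at least two odd components; case (2) is not subsumed by case (3), since a hypomatchable graph has $X = \emptyset$ and a single odd component, whereas the paper's repartitioning step needs at least two. You also explicitly acknowledge that the quantitative bounds (``$|X| \le b - 72t - 1$,'' the new matching having size $\ge b' - 53t$, the ``$50t$ neighbours in $B$'' constraint in (iii)) are left unverified and ``leave essentially no slack.'' In the paper, these bounds are established via a nontrivial argument: a balanced bipartition of the components of $H - X$ yields a $K_{t,t}$ (hence a $K_t$-immersion by Lemma~\ref{bip-im}) unless one side has $\le t$ vertices, giving $|X'| + |Y| \le 2t$; and the $6t-1$ mixed matching edges are ruled out via a density argument through Lemma~\ref{density}. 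Without reproducing these counting steps, the argument as written has genuine gaps.
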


\begin{proof}
Suppose for a contradiction that the lemma does not hold and consider a counterexample so that $|V(G)|$ is minimum,
and subject to this $b$ is minimum.  Note that the lemma is trivial for $t = 1$ so we may further assume that $t \ge 2$.  If there exists a parallel class without both ends in $A$ then let $u$ be it's end in $A$ and let $u'$ be its other end.
Otherwise, we let $u$ be an arbitrary vertex in $A$.  Let $H$ be the complement of the graph induced by $N(u) \setminus A$.  Now, if we have the parallel
class $uu'$ then $u' \in V(H)$ and we modify $H$ by adding a clone $u''$ of the vertex $u'$ which is not adjacent to the original.  Our proof involves a sequence of claims.

\bigskip

\noindent{(1)} $H$ does not have a perfect matching.

\smallskip

Suppose for a contradiction that (1) fails and choose a perfect matching $M$ of $H$ so that $M$ has the maximum number of edges with both ends in the set $B$.  Suppose for a contradiction that there is a subset $M' = \{ v_1w_1, v_2w_2, \ldots, v_{6t-1}w_{6t-1} \}$
of $M$ so that no edge in $M'$ is incident with the clone (if it exists) and so that $v_i \in B$ and $w_i \not\in B$ for $1 \le i \le 6t-1$.
If there exist $1 \le i < j \le 6t-1$ so that $v_i v_j, w_i w_j \in E(H)$, then we could exchange the edges $v_i v_j, w_i w_j$ for $v_i w_i, v_j w_j$ and thus improve $M$.  It follows that at least one of these edges
is not in $H$.  But then, in our original graph $G$, either the subgraph induced by $\{ v_1, v_2, \ldots, v_{6t-1} \}$ or the subgraph induced by $\{ w_1, \ldots, w_{6t-1} \}$ has at least $\tfrac{1}{2}{6t-1 \choose 2}$
edges.  Then applying Lemma \ref{density} to this subgraph gives us an immersion of $K_t$ which is a contradiction (to check this,
it suffices that $\frac{m}{n \log{n^2/m}} + \frac{1}{3} \ge t$ when $n = 6t-1$ and $m= \frac{1}{4}(6t-1)(6t-2)$ but this is equivalent to
$\log(4 + \frac{4}{6t-2}) \le \frac{3}{2}$ which holds for all $t \ge 2$).  Thus, we may assume no such subset of $M$ exists.

Now we shall view the perfect matching $M$ of $H$ as a set of nonedges in $G$ (we treat an edge in $M$ with endpoint $u''$ as a nonedge in $G$ with the corresponding endpoint $u'$), and we split the vertex $u$ so that the edges between $u$ and $V \setminus A$ split off to form $M$, and we split off the remaining edges between $u$ and $A \setminus \{u\}$ arbitrarily (this is possible since $u$ has even degree).
Let us call this newly formed graph $G'$.  If $A = \{u\}$, then $G'$ is a simple graph and we
get a smaller counterexample by choosing a vertex $w \in V(G')$ and taking $G'$ together with the sets $A' = \{w\}$ and $B'$ a subset of
$100t$ neighb{ou}rs of $w$, and $s' = 0$.  Thus $A \neq \{u\}$ and we now set $A' = A \setminus \{u\}$ and $B' = B$ and $s' = s+1$.
We claim that $G', A', B', s'$ yield a smaller counterexample.  All properties except (iv) are immediate, so it remains only to check this one.  Let $M^*$ be the subset of edges of $M$ which are not incident with the clone in $H$
and have both ends in $B$.  Now we have that $M^*$ is a matching of nonedges in $G$ which covers all vertices of $B$ except for a set of at most $6t-2$ vertices that are joined by $M$ to vertices outside $B$, and except for
possibly one vertex which was joined by $M$ to the clone.  It follows that $M^*$ has size at least $\frac{1}{2}(b - (6t-2) - 1) > \frac{1}{2}b - 3t \ge b - 53t$ (since $b \le 100t$).  It follows that property (iv) holds as desired.

\bigskip

\noindent{(2)} $H$ is not hypomatchable.

\smallskip

Suppose for a contradiction that (2) fails. If every vertex in $B$ except for possibly $u'$ more than $50t$ neighb{ou}rs in $B$, then the subgraph induced by $B$ has more than $\frac{1}{2} (b-1)50t \ge 1800t^2 - 25t \ge 1700t^2$ edges and at most $100t$ vertices
so applying Lemma \ref{density} to it gives us an immersion of $K_t$ which is contradictory.  Thus, we may choose a vertex $w \in B \setminus \{u'\}$ so that $w$ has at most $50t$ neighb{ou}rs in $B$.
Since the graph $H$ is hypomatchable, the graph $H - w$ has a perfect matching, and we choose one such perfect matching $M$ so that $M$ has the maximum number of edges with both ends in the set $B$.  As in the previous case,
we find that there does not exist a subset $M'$ of $M$ consisting of $6t-1$ edges not incident with the clone which join vertices in $B$ to vertices outside $B$.

As in the previous case, we now view $M$ as a set of nonedges in $G$ and we shall split the edges between $u$ and $V(G) \setminus (A \cup \{w\})$ to form $M$, and we split the remaining edges between $u$ and
$\{w\} \cup (A \setminus \{u\})$ arbitrarily.  As before, let us call this newly formed graph $G'$.  Note that because $u$ has even degree, this operation is possible, and furthermore, the edge between $u$ and $w$ must
be split off with another edge between $u$ and a vertex in $A \setminus \{u\}$.  In particular, this implies that the set $A' = A \setminus \{u\}$ is still nonempty.  Now we set $B' = B$ and $s' = s+1$ and claim that $G'$
together with $A'$, $B'$, and $s'$ form a smaller counterexample.  This time our splitting operation has created a new parallel edge between $A'$ and $B'$ (namely the new edge incident with $w$), but we also
eliminated any such existing parallel edge, so (ii) and (iii) still hold.  All other properties except (iv) are immediate, and the proof of (iv) is similar to the previous case.  As before if we take $M^*$ to be the subset
of edges in $M$ which are not incident with the clone in $H$ and have both ends in $B$, then $M^*$ is a matching of nonedges in $G$ which covers all of $B$ except for a set of at most $6t-2$ vertices which were
matched by $M$ to vertices outside $B$, and except for possibly one vertex joined by $M$ to the clone, and except for the vertex $w$.  It follows that $M^*$ has size at least $\frac{1}{2}(b - 6t) \ge b - 53t$ as desired.

\bigskip

\noindent{(3)} If $B' \subseteq B$, there exist $s$ edge-disjoint matchings in $B'$ each of size at least $|B'| - 53t$.

\smallskip

This is clear, since the removal of $k$ vertices from $B$ can only decrease the size of a matching by $k$ edges.

\bigskip

\noindent{(4)} $b \ge 74t$.

\smallskip

If $a \ge t$, then it follows from Lemma \ref{bip-im} that $G$ immerses $K_t$ and we are finished.  Similarly,
if $s \ge 12t$ then consider a subgraph $G'$ of $G$ induced by $72t$ vertices from $B$.  It follows from (3) that $G'$ has at
least $228t^2$ edges and now applying Lemma \ref{density} to $G'$ we
find that $G'$ immerses $K_t$, a contradiction.  Now (v) gives us
$b \ge 100t - 2a - 2s > 100t - 2t - 24t = 74t$, as desired.

\bigskip

Our next step will be to apply Theorem \ref{ed-gal} to $H$. It follows from (1) that this applies nontrivially to give us a set $X$ as in the theorem. Note that $|X| < |H|/2$, since $oddcomp(H-X)>|X|$, so $|H-X|> |H|/2 \geq b/2\geq 37t$. We claim that there is a single component $K$ of $H-X$ which contains all but at most $t$ vertices of $H-X$. To see this, consider a bipartition of the components $H-X$ which is as balanced as possible (in terms of number of vertices on each side). If each side has at least $t+1$ vertices then, after possibly removing our cloned vertex, we still have a partition where both sides have $\ge t$ vertices.  Back in the original graph this gives us a $K_{t,t}$, and now applying Lemma \ref{bip-im} gives a contradiction. So one side of the bipartition must have at most $t$ vertices, and the other set must have more than $2t$ (in fact, more than $36t$). Since the bipartition is as balanced as possible, the larger set has only one component so we have our claim. Now, since $K$ contains all but at most $t$ vertices of $H-X$, we know that $t+1\geq comp(H-X)>|X|$. Hence $K$ contains all but at most
$2t$ vertices of $H$.

It follows from (1) and (2) that $oddcomp(H-X)\geq 2 $. Let $Y$ be the union of the vertex sets of the components of $H-X$ other than $K$.  In the original graph $G$, every
vertex in $Y$ is adjacent to every vertex in $V(K)$ so every vertex in $Y$ is adjacent to all but at most $2t$ vertices in $B$.  It follows from
this and (iii) that if we have a cloned vertex, then this clone is not in $Y$.  If there is a clone and it appears in $X$, then let $X'$ be the
subset of $X$ obtained by removing this clone; otherwise, let $X' = X$.  We now have the following properties:
\begin{itemize}
\item $|X'| + |Y| \le 2t$.
\item $|Y| \ge |X'|$.
\item In $G$, every vertex in $Y$ is adjacent to every vertex in $B \setminus (X' \cup Y)$.
\end{itemize}
Set $A' = A \cup Y$, set $B' = B \setminus (X' \cup Y)$ and set $s' = s$.  It follows from the above properties (together with (3) and (4)) that $G$ with
$A'$, $B'$, and $s'$ form a smaller counterexample, thus completing the proof.
\end{proof}

\bigskip

\begin{proof}[Proof of Theorem \ref{main}]
Let $G$ be a simple graph with minimum degree at least $200t$.
It follows from Lemma \ref{eul-sg} that $G$ has an Eulerian subgraph $G'$ of minimum degree at least $100t$.  Now choose a vertex $u \in V(G')$ and apply the previous lemma to $G'$ with the sets $A = \{u\}$, $B$ a set of $100t$ neighb{ou}rs of $u$, and $s =0$.  This yields
a strong immersion of $K_t$ in $G$ as desired.
\end{proof}

\section{Clique immersions in dense graphs}

\label{sect:dense}

In this section we treat the dense case, when graphs have quadratically many edges. Let us recall that a $1$-immersion of a graph $H$ into a graph $G$ is an immersion of $H$ into $G$ such that each of the paths of the immersion has exactly one internal vertex. We will prove Theorem \ref{main1immerse} which shows that dense graphs contain 1-immersions of large cliques. More precisely, a graph on $n$ vertices with $2cn^2$ edges contains a strong $1$-immersion of the complete graph on at least $c^2 n$ vertices. Alon, Krivelevich, and Sudakov \cite{AKS} proved that every graph $G$ on $n$ vertices with $cn^2$ edges contains a $1$-subdivision of a complete graph with at least $c'\sqrt{n}$ vertices for some positive $c'$ depending on $c$. In comparison, our Theorem \ref{main1immerse} shows that, over all graphs of a given order and size, the order of the largest guaranteed clique $1$-immersion is about the square of the order of the largest guaranteed clique $1$-subdivision.

%\begin{theorem}\label{main1immerse}
%Let $G$ be a graph on $n$ vertices and $2cn^2$ edges. Then there is a $1$-immersion of the complete graph on at least $c^2 n$ vertices in $G$.
%\end{theorem}

In a graph, the \DEF{common neighb{ou}rhood} of a set $S$ of vertices is the set of all vertices adjacent to all vertices in $S$. The following is the key lemma for the proof of Theorem \ref{main1immerse}. It uses a powerful probabilistic technique known as {\em dependent random choice} (see the survey \cite{FoSu} for more details). The basic idea is that, in a dense graph, the common neighb{ou}rhood $X$ of a small random subset of vertices likely is both large and has only few small subsets $S \subset X$ which have small common neighb{ou}rhood.
%For a vertex $v$ in a graph, let $d(v)$ denote its degree.
The {\it codegree} $d(u,v)$ of a pair of vertices $u,v$ in a graph is the number of vertices adjacent to both $u$ and $v$.

\begin{lemma}
\label{mainlem}
Let\/ $H=(A,B;E)$ be a simple bipartite graph with $n$ vertices and at least $cn^2$ edges, where $|B| \geq |A|$ and $c^2n > 2$. Let $s=\lceil c^2n \rceil$.
For a pair $u,v$ of distinct vertices, let $w(u,v)=1/d(u,v)$ if $d(u,v)<2s$ and $w(u,v)=0$ otherwise.
Define the weight of a vertex $v$ with respect to a set $S$ to be $w_S(v):=\sum_{u \in S \setminus \{v\}}w(u,v)$. Then there is a subset $U \subset A$ such that
$|U| = s$ and all vertices $v \in U$ satisfy $w_U(v) < \tfrac{1}{2}$.
\end{lemma}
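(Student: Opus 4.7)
The plan is to apply a dependent random choice argument: sample two vertices $x_1, x_2 \in B$ uniformly and independently (with replacement), set $X := N(x_1) \cap N(x_2) \subseteq A$, and call $v \in X$ \emph{bad} if $w_X(v) \geq 1/2$. Any subset $U \subseteq X \setminus \text{bad}$ automatically satisfies $w_U(v) \leq w_X(v) < 1/2$ for every $v \in U$, since removing vertices can only decrease the weight. So it suffices to find a realization of $(x_1, x_2)$ with $|X \setminus \text{bad}| \geq s$, and then take $U$ to be any $s$-element subset.

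For the size of $X$, we have $E[|X|] = \sum_{u \in A}(d(u)/|B|)^2$, which by Cauchy--Schwarz is at least $|E|^2/(|A||B|^2) \geq c^2 n^4/(|A||B|^2)$. For the weight $W(X) := \sum_{\{u,v\}\subseteq X} w(u,v)$, the only pairs contributing to $E[W(X)]$ are those with $0 < d(u,v) < 2s$, and each such pair contributes $w(u,v)\cdot(d(u,v)/|B|)^2 = d(u,v)/|B|^2$; summing over pairs and using $d(u,v) < 2s$ gives $E[W(X)] \leq 2s\binom{|A|}{2}/|B|^2 \leq s|A|^2/|B|^2$.

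A Markov-type count turns this into a bound on bad vertices: since $2W(X) = \sum_{v \in X} w_X(v) \geq (1/2)|\text{bad}|$, we have $|\text{bad}| \leq 4W(X)$. Combining, $E[|X| - |\text{bad}|] \geq c^2 n^4/(|A||B|^2) - 4s|A|^2/|B|^2$, and I want this to be at least $s$. After multiplying through by $|B|^2/s$, the desired inequality rearranges to
\[
\frac{c^2 n^4}{s} \;\geq\; |A|\bigl(|B|^2 + 4|A|^2\bigr).
\]

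The hard part will be verifying this final inequality, which is where the hypothesis $c^2 n > 2$ enters. Writing $|A| = \alpha n$ with $\alpha \leq 1/2$ (since $|A| \leq |B|$ and $|A|+|B|=n$), the right side equals $n^3 \cdot \alpha\bigl((1-\alpha)^2 + 4\alpha^2\bigr)$. The cubic $\alpha((1-\alpha)^2 + 4\alpha^2)$ has derivative $1 - 4\alpha + 15\alpha^2$, whose discriminant is negative; hence it is strictly increasing on $[0,1/2]$ and is maximized at $\alpha = 1/2$ with value $5/8$. So $|A|(|B|^2 + 4|A|^2) \leq 5n^3/8$, and the desired inequality reduces to $8c^2 n \geq 5s$; using $s \leq c^2 n + 1$ this becomes $3c^2 n \geq 5$, which is implied by $c^2 n > 2$.
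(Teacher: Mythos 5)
Your proof is correct and follows the same dependent random choice strategy as the paper: sample two random vertices of $B$, let $X$ be their common neighbourhood in $A$, lower-bound $\mathbb{E}[|X|]$ and upper-bound the expected weight, and use a Markov-type count to remove the bad vertices. The only difference is in the final bookkeeping: the paper first optimizes $\mathbb{E}[|X|]\ge \tfrac{27}{4}c^2n$ over the bipartition ratio and then crudely uses $|A|\le|B|$ on the weight term, whereas you carry $|A|$ and $|B|$ together and optimize the combined cubic $\alpha((1-\alpha)^2+4\alpha^2)$, arriving at the slightly weaker sufficient condition $c^2n\ge 5/3$ --- a minor refinement, not a different route.
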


\begin{proof}
Define $\alpha=|B|/n$. Pick two vertices from $B$ at random with repetitions. Let $X$ denote the set of common neighb{ou}rs of the selected random pair. The probability that a given vertex
$v \in A$ is in $X$ is $({\mathit deg}(v)/|B|)^2$. Hence,
$$\mathbb{E}[|X|]=\sum_{v \in A}\left(d(v)/|B|\right)^2 \geq |A|\left(\frac{cn^2}{|A||B|}\right)^2 = \frac{c^2n}{\alpha^2(1-\alpha)}
\geq \frac{27}{4}c^2n,$$
where the first inequality uses convexity of the function $f(x)=x^2$ together with Jensen's inequality, and the second inequality
uses that the maximum of $\alpha^2(1-\alpha)$ with $\alpha \in [0,1]$ is $4/27$ which occurs at $\alpha=2/3$.

Let $Y=\sum_{u,v \in X,u \not = v}w(u,v)=\sum_{v \in X}w_X(v)$. For a pair $u,v \in A$ of distinct vertices, the probability that $u$ and $v$ are in $X$ is $\left(d(u,v)/|B|\right)^2$. Thus the expected value of $Y$ satisfies
\begin{align*}
  \mathbb{E}[Y]
  &= \sum_{u,v \in A,u \not = v,d(u,v)<2s}\frac{1}{d(u,v)}\left(d(u,v)/|B|\right)^2 \\[1mm]
  &= |B|^{-2}\sum_{u,v \in A, u \not = v,d(u,v)<2s}d(u,v) \\[1mm]
  &\le |B|^{-2}|A|^2 (2s-1).
\end{align*}
Hence there is a choice of one or two vertices from $B$ such that the set $X$ satisfies
\begin{align*}
  |X|-2Y &\geq \mathbb{E}[|X|-2Y]=\mathbb{E}[|X|]-2\mathbb{E}[Y] \\
         &\ge \frac{27}{4}c^2n-2|B|^{-2}|A|^2(2s-1) \geq \frac{27}{4}c^2n-4s+2 > s.
\end{align*}
We used linearity of expectation, $|B| \geq |A|$, $c^2n > s-1$ and $s\ge3$. Notice that $2Y=2\sum_{v \in X}w_X(v)$ is an upper bound on the number of vertices $v \in X$ with $w_X(v) \geq \tfrac{1}{2}$. Delete from $X$ each vertex $v$ with $w_X(v) \geq \tfrac{1}{2}$, and let $U'$ denote the resulting subset. Since $|X|-2Y > s$, and $2Y$ is an upper bound on the number of vertices deleted from $X$ to obtain $U'$, we have $|U'| >s$. Let $U$ be any subset of $U'$ with $|U|=s$. For each $v \in U$, we have $w_U(v) \leq w_X(v) < \tfrac{1}{2}$, where the first inequality follows from the definition of the weight and $U \subset X$. This completes the proof.
\end{proof}

\begin{proof}[Proof of Theorem \ref{main1immerse}]
If $c^2n \leq 1$, then we can pick any vertex of $G$ to be the $1$-immersion of $K_1$. If $1 < c^2n \leq 2$, then $c> 1/\sqrt{n}$, and the number of edges of $G$ is at least $2cn^2 \geq 2n^{3/2} > n/2$, and hence there is a vertex of degree at least $2$ in $G$. So there is a path with two edges, and this is a $1$-immersion of $K_2$. So we may assume that $c^2n>2$.

As $G$ has $n$ vertices and at least $2cn^2$ edges, there is a bipartite subgraph $H$ that has at least $cn^2$ edges, namely the maximum cut has this property. Let $A$ and $B$ denote the bipartition of $H$, with $|B| \geq |A|$. Let $s=\lceil c^2n \rceil$. By Lemma \ref{mainlem}, there is a vertex subset $U \subset A$ with $|U| =s$ and all vertices in $U$ have weight with respect to $U$ less than $\tfrac{1}{2}$. This implies that for each $i$, $1 \leq i < 2s$, and each vertex $v \in U$, there are less than $i/2$ vertices $u \in U \setminus \{v\}$ with $d(u,v) \leq i$. Indeed, otherwise the weight of $v$ with respect $U$ would be at least $\frac{i}{2} \cdot \frac{1}{i} = \tfrac{1}{2}$, a contradiction.

We will find a $1$-immersion of the complete graph on $s$ vertices, where $U$ is the set of vertices of the clique immersion. So we need to find, for each pair $u,v$ of distinct vertices in $U$, a path $P_{uv}$ from $u$ to $v$ with one internal vertex, such that all these paths are edge-disjoint. The internal vertices of the paths will be chosen from $B$. Let us order the pairs $u,v$ of distinct vertices of $U$ by increasing value of $d(u,v)$. We will select the paths in this order. When it is time to pick the path between $u$ and $v$, we will pick any possible vertex $z \in B$ for the internal vertex. That is, if $z \in B$ is such that $uz$ and $zv$ are edges of $G$ and these edges are not in any of the previously chosen paths, then we choose $P_{uv} = uzv$. Suppose that the pair $u,v$ has $r$ common neighb{ou}rs in $B$. So far we have picked at most $|U|-1=s-1$ paths containing $u$ and at most $|U|-1=s-1$ paths containing $v$.
Thus, if $r\ge 2s$, there is a common neighb{ou}r $z$ that can be used for the path $P_{uv}$.

Suppose now that $r<2s$. Since $u$ and $v$ have weight less than $\tfrac{1}{2}$, the number of pairs $(u,x)$ with $x \in U \setminus \{u\}$ such that $d(u,x) \leq r$ is less than $r/2$, and similarly the number of pairs $(v,x)$ with $x \in U \setminus \{v\}$ such that $d(v,x) \leq r$ is less than $r/2$. Hence, less than $r/2$ edges containing $u$ are in previously selected paths, and similarly less than $r/2$ edges containing $v$ are in previously selected paths. Therefore, there is a vertex $z$ adjacent to both $u$ and $v$ such that the edges $uz$ and $zv$ are not on previous paths. We thus can choose the path $P_{uv}=uzv$. Once we are done picking the paths, we have the desired $1$-immersion. Observe that this immersion is strong.
\end{proof}

As the following proposition shows, Theorem \ref{main1immerse} is tight in the sense that the exponent in $c^2$ cannot be improved.

\begin{proposition}
For every $\varepsilon > 0$ and integer $n_0$, there exists a simple graph $G$ on $n>n_0$ vertices that contains no $1$-immersion of the complete graph
on $c^{2-\varepsilon}n$ vertices, where $c=|E(G)|/{n \choose 2}$.
\end{proposition}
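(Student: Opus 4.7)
The plan is to take $G$ to be the vertex-disjoint union of a clique $K_m$ and $n-m$ isolated vertices, for an appropriate choice of $m$ depending on $\varepsilon$. The intuition is that in such a graph the maximum order of a $1$-immersion is controlled purely by the edge count of $K_m$, so a careful choice of $m/n$ makes this bound smaller than $c^{2-\varepsilon}n$.

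The key step is an edge-counting bound on $1$-immersions inside a clique. Since any $1$-immersion of $K_t$ with $t\ge 2$ is a connected subgraph, it must lie entirely inside the unique non-trivial component $K_m$. Such an immersion consists of $\binom{t}{2}$ edge-disjoint paths of length $2$, each using two edges, so it uses $t(t-1)$ distinct edges of $K_m$. Since $|E(K_m)| = \binom{m}{2} = m(m-1)/2$, this forces $t(t-1) \le m(m-1)/2$, hence $t < m/\sqrt{2} + 1$.

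Now I would relate this bound to the density. Setting $\alpha = m/n$, we have $c = \binom{m}{2}/\binom{n}{2} = (1+o(1))\alpha^{2}$, so $c^{2-\varepsilon}n = (1+o(1))\alpha^{4-2\varepsilon}n$. The desired inequality $m/\sqrt{2} + 1 < c^{2-\varepsilon}n$ reduces (after dividing by $\alpha n$) to the asymptotic condition
\[
    \alpha^{3-2\varepsilon} > \frac{1}{\sqrt{2}}.
\]
For $\varepsilon \ge 3/2$ the exponent is non-positive, so this holds automatically for every $\alpha \in (0,1]$; in particular one may even take $\alpha = n^{-\delta}$ for any $\delta>0$, which makes $c$ arbitrarily small. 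For $\varepsilon < 3/2$ it suffices to take $\alpha$ to be any constant exceeding $2^{-1/(2(3-2\varepsilon))}$, for instance $\alpha=1$, which corresponds to the trivial choice $G=K_n$.

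The only real obstacle is absorbing the lower-order corrections (the $o(1)$ in $c$, the additive constant in the immersion bound, and the rounding of $c^{2-\varepsilon}n$ to an integer), but this is entirely routine: having fixed $\alpha$ with $\alpha^{3-2\varepsilon} > 1/\sqrt{2}$, one chooses $n > n_0$ large enough in terms of $\varepsilon$ so that the strict asymptotic inequality becomes a strict finite-$n$ inequality, yielding the required graph $G$.
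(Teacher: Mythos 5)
Your proof is formally correct for the proposition as stated, but it essentially trivializes it and misses its real point. Your edge-count bound is sound: a $1$-immersion of $K_t$ uses $2\binom{t}{2} = t(t-1)$ edge-disjoint edges, so inside $K_m$ one needs $t(t-1)\le\binom{m}{2}$, giving $t < m/\sqrt{2}+1$; and your asymptotic reduction to $\alpha^{3-2\varepsilon}>1/\sqrt{2}$ (with $\alpha=m/n$) is also correct. In particular, as you note, the choice $\alpha=1$ ($G=K_n$, $c=1$, $c^{2-\varepsilon}n=n$) already works for every $\varepsilon>0$ and every $n\ge 4$, since $K_n$ has only $\binom{n}{2}<n(n-1)$ edges. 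So the literal statement is established.

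However, this is a genuinely different — and much weaker — route than the paper's. The substantive content of the proposition is that the exponent $2$ in Theorem \ref{main1immerse} cannot be lowered \emph{as $c\to 0$}; that is what ``the exponent in $c^2$ cannot be improved'' means. For $\varepsilon<3/2$ your construction forces $\alpha$ (hence $c\approx\alpha^2$) to be a constant bounded away from zero (indeed $\alpha>2^{-1/(2(3-2\varepsilon))}$). With $c$ a constant, $c^{2}n$ and $c^{2-\varepsilon}n$ differ only by a fixed multiplicative factor, so no information about the exponent on $c$ is extracted: your example is really just a statement about constants, not exponents, and in the only regime where your $\alpha$ can tend to zero ($\varepsilon\ge 3/2$) the assertion is uninteresting. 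The paper instead fixes an integer $t>1+2(2-\varepsilon)/\varepsilon$ and takes a random graph $G(n,c)$ with $c=t^{-4}n^{-1/2-1/(t-1)}$, so that $c\to 0$ as $n\to\infty$; a first-moment count of copies of $1$-immersions of $K_t$ (a $1$-immersion of $K_t$ has at most $t+\binom{t}{2}$ vertices and exactly $2\binom{t}{2}$ edges) shows the expected number is less than one, yielding a sparse $G$ with no $1$-immersion of $K_t$ while $c^{2-\varepsilon}n\ge t$. That construction is the one that actually certifies that the dependence $c^2$ is essentially best possible over the full range of densities, which your clique-plus-isolated-vertices example does not capture.
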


\begin{proof}
Fix an integer $t>1+2(2-\varepsilon)/\varepsilon$ and let $n>n_0$ satisfy $n^{\frac{\varepsilon}{2} - \frac{2-\varepsilon}{t-1}}\ge t^{9-4\varepsilon}$ and
$n=r^{2(t-1)}$ with $r$ a multiple of $2t^4$ so that $\frac{1}{2}t^{-4}n^{\frac{1}{2}+\frac{1}{t-1}}$ is an integer.
Consider the random graph $G(n,c)$ with $c=t^{-4}n^{-\frac{1}{2}-\frac{1}{t-1}}$. By our choice of $n$ and $c$, we have $c{n \choose 2}$, the expected number of
edges of the random graph $G(n,c)$, is an integer. A $1$-immersion of $K_t$ has,
including the vertices in the paths, at most $v:=t+{t \choose 2}=\frac{t^2+t}{2}$ vertices,
and has $e:=2{t \choose 2}=t^2-t$ edges. The
number of graphs with $v$ vertices and $e$ edges is at most ${{v \choose 2} \choose e} < v^{2e}$. Therefore, the expected
number of $1$-immersions of $K_t$ in $G(n,c)$ is less than
$v^{2e}c^en^v=v^{2e}t^{-4e} \leq 1$. It is easy to check that the expected
number of copies of any particular subgraph $H$ in a uniform random graph on
$n$ vertices and edge density $c$ is at most the expected number of copies of
$H$ in $G(n,c)$. In particular, this implies that there is a graph $G$ on $n$
vertices with edge density $c$ and no $1$-immersion of $K_t$.
By the choice of $c$ and $n$, we have
$$c^{2-\varepsilon}n=t^{-8+4\varepsilon}n^{\frac{\varepsilon}{2}-\frac{2-\varepsilon}{t-1}}\ge t,$$
thus $G$ contains no $1$-immersion of the complete graph on $c^{2-\varepsilon}n$ vertices.
\end{proof}

\section{Clique immersions in very dense graphs}

\label{sect:examples}

We show in this section that immersing a clique into a very dense graph is closely related to the chromatic index of the complement of the graph. The {\it chromatic index} $\chi'(G)$ of a graph $G$ is the minimum number of col{ou}rs in a proper edge-col{ou}ring of $G$, that is, in a col{ou}ring of the edges of $G$ so that no two edges having a vertex in common receive the same col{ou}r. Letting $\Delta(G)$ denote the maximum degree of $G$, we have $\chi'(G) \geq \Delta(G)$, as the edges containing a vertex of maximum degree must be different col{ou}rs in a proper edge-col{ou}ring of $G$. Vizing's classical theorem says that this is bound is close to best possible, namely $\chi'(G) \leq \Delta(G)+1$ holds for every graph $G$.

Paul Seymour \cite{Se} (see \cite{DKMO} or \cite{LM}) found examples of graphs of minimum degree $d$ that do not contain $K_{d+1}$-immersions, for every $d\ge 10$. In this paper, we exhibit a large class of similar examples which generalize those provided by Seymour (see Theorem \ref{counterexamples} below).

\begin{lemma}\label{obvious}
Suppose $G$ has a $K_t$-immersion on a vertex subset $J$. Then $G$ contains an $K_t$-immersion on $J$ in which the edges between adjacent vertices in $J$ are used as the paths between these vertices.
\end{lemma}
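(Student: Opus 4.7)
My plan is an edge-swapping argument driven by an extremal principle. Given a $K_t$-immersion of $G$ on $J$, call an adjacent pair $u,v\in J$ (i.e.\ $uv\in E(G)$) \emph{bad} if the assigned path $P_{uv}$ has length greater than $1$. Among all $K_t$-immersions of $G$ on $J$, I fix one that minimises the number of bad pairs; it suffices to show this count is zero, as that is exactly the conclusion of the lemma. So assume for contradiction that $(u,v)$ is a bad pair.

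If the edge $uv$ is used by none of the paths of the immersion, I simply replace $P_{uv}$ by the single-edge path $uv$; this gives a valid $K_t$-immersion with one fewer bad pair, contradicting minimality. Otherwise, $uv$ lies on some path $P_{xy}$ with $\{x,y\}\neq\{u,v\}$, and $x,y\in J\setminus\{u,v\}$. Writing $P_{xy}=Q_1\cup\{uv\}\cup Q_2$, where $Q_1$ is the $(x,u)$-subpath and $Q_2$ is the $(v,y)$-subpath (up to reversing the orientation of $P_{xy}$), the concatenation of $Q_1$, $P_{uv}$, and $Q_2$ is an $(x,y)$-trail whose edges lie in $(E(P_{xy})\setminus\{uv\})\cup E(P_{uv})$; these three sets are pairwise edge-disjoint in the original immersion. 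Extract from this trail a simple $(x,y)$-path $P'_{xy}$, and define a new immersion by replacing $P_{uv}$ by the edge $uv$ and $P_{xy}$ by $P'_{xy}$, leaving every other $P_{ab}$ unchanged. Edge-disjointness is preserved: the edge $uv$ now appears only in the new $P'_{uv}$, while the edges of $P'_{xy}$ come from $E(P_{xy})\cup E(P_{uv})$, each of which was edge-disjoint from every $P_{ab}$ not being swapped. The pair $(u,v)$ has become good, the bad/good status of every other adjacent pair is unchanged (since only $P_{uv}$ and $P_{xy}$ are modified), and the total number of bad pairs has strictly decreased, contradicting minimality.

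The only mildly delicate step is extracting a simple $(x,y)$-path from the constructed trail, which is routine and uses only the standard fact that any walk between two vertices in a graph contains an $(x,y)$-path on a subset of its edges. The remaining work is bookkeeping to confirm that the swap preserves edge-disjointness of the whole path system, which is handled by the set-containment observation above.
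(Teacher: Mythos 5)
Your argument is correct and matches the paper's proof: the core swap (replace $P_{uv}$ by the edge $uv$, splice the old $P_{uv}$ into whichever path $P_{xy}$ used that edge, then extract a simple path from the resulting walk) is exactly what the paper does. The only difference is that you wrap this in an explicit extremal argument (minimize the number of bad pairs) whereas the paper informally iterates ``do this for each edge,'' so yours is a slightly more careful write-up of the same idea.
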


\begin{proof}
Indeed, suppose $v$ and $w$ are adjacent and edge $vw$ is not used as the path $P_{vw}$ between $v$ and $w$. If the edge $vw$ is not in any of the paths in the clique immersion, we can just replace $P_{vw}$ by this edge. Otherwise, $vw$ is in a path $P$ between two vertices $u,u'$. We can then replace $P_{vw}$ by $vw$, and replace in $P$ the edge $vw$ by $P_{vw}$. This creates a walk from $u$ to $u'$ containing a path from $u$ to $u'$ that can be used for the immersion. We can do this for each edge contained in $J$ and get a $K_t$-immersion on $J$ in which the edges between adjacent vertices in $J$ are used as the paths between these vertices.
\end{proof}

As there are many examples of regular graphs for which the bound in Vizing's theorem is tight, the following theorem gives an interesting large class of examples of graphs of minimum degree $d$ that do not contain a clique immersion of $K_{d+1}$. They generalize previously found example of Seymour.

For a graph $G$ and vertex subsets $S$ and $T$, let $e(S)$ denote the number of edges with both endvertices in $S$, let $\bar e(S)={|S| \choose 2}-e(S)$ denote the number of pairs of vertices in $S$ that are nonadjacent in $G$, and let $e(S,T)$ ($\bar e(S,T)$) denote the number of pairs $(s,t) \in S\times T$ that are adjacent (respectively, nonadjacent) in $G$.

\begin{theorem}\label{counterexamples}
  Suppose $H_1,\ldots,H_t$ are simple $D$-regular graphs, each with chromatic index $D+1$, where $t > \tfrac{1}{2}D(D+1)$. Let $G$ be the complement of the graph formed by taking the disjoint union of $H_1,\ldots,H_t$. Letting $n$ denote the number of vertices of\/ $G$, the minimum degree of\/ $G$ is $n-1-D$ but $G$ does not contain an immersion of the complete graph on $n-D$ vertices.
\end{theorem}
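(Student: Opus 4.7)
Suppose for contradiction that a $K_{n-D}$-immersion exists on a vertex set $J$ of size $n-D$, and set $S := V(G)\setminus J$, so $|S|=D$.

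By Lemma \ref{obvious}, I may assume that for every adjacent pair $uv \in E(G[J])$ the path $P_{uv}$ is the single edge $uv$. The remaining paths, one for each non-adjacent pair in $J$, must be pairwise edge-disjoint and avoid $E(G[J])$. Each vertex of $J$ has exactly $D$ non-neighbours in $G$, so counting non-edges at $J$ yields $|J|\cdot D = 2\bar e(J)+\bar e(J,S)$, whence $e(J,S)=(n-D)D-\bar e(J,S)=2\bar e(J)$. Any path routing a non-adjacent pair $u,v \in J$ uses at least two edges of $e(J,S)$—one to leave $J$ at $u$, one to enter $J$ at $v$, plus two additional such edges for every interior $J$-vertex. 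Summing over all $\bar e(J)$ such paths gives at least $2\bar e(J) = e(J,S)$, matching the total budget. Equality is therefore forced everywhere: every edge of $e(J,S)$ is used, every such path uses exactly two of them, and no such path visits $J$ in its interior. Consequently each non-adjacent-pair path has the form $u,x_1,\ldots,x_k,v$ with $x_j \in S$ and $k \geq 1$; I call it \emph{long} when $k \geq 2$, equivalently when it uses at least one edge of $e(S)$.

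Now I would use the chromatic-index hypothesis. Set $S_i := S \cap V(H_i)$. Since $\sum_i |S_i| = D$, at least $t-D$ of the $t$ components satisfy $S_i = \emptyset$, meaning $V(H_i) \subseteq J$. Fix any such $i$. For each $x \in S$, let $M_x^{(i)}$ be the set of edges $uv \in E(H_i)$ whose path $P_{uv}$ is the length-$2$ path $u,x,v$; this is a matching of $H_i$, since two such paths through $x$ would share the edge $ux$. If every path routing an edge of $E(H_i)$ had length $2$, then $\{M_x^{(i)}\}_{x\in S}$ would cover $E(H_i)$ by $|S|=D$ matchings, giving a proper $D$-edge-colouring of $H_i$ and contradicting $\chi'(H_i)=D+1$. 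Hence each component with $S_i=\emptyset$ supplies at least one long path, and these long paths are all distinct (non-adjacent pairs from distinct components of the disjoint union $H_1\sqcup\cdots\sqcup H_t$ are themselves distinct). The total number of long paths is therefore at least $t-D$.

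To close, note that every long path uses at least one edge of $e(S)$ and paths are edge-disjoint, so the number of long paths is at most $e(S) \leq \binom{|S|}{2} = \binom{D}{2}$. Combining gives $t-D \leq D(D-1)/2$, i.e., $t \leq D(D+1)/2$, contradicting $t > D(D+1)/2$. The main hurdle is noticing that the edge-count $e(J,S) = 2\bar e(J)$ is saturated, which rigidly forces every non-adjacent-pair path to consist of two $(J,S)$-edges flanking an interior walk through $S$; once that structural fact is in hand, the chromatic-index obstruction applied component-wise, together with the trivial $\binom{D}{2}$ cap on edges inside $S$, finishes the proof.
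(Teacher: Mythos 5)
Your proof is correct and follows essentially the same strategy as the paper's: invoke Lemma~\ref{obvious}, count edges incident with $S$ against the demands of the $\bar e(J)$ paths between nonadjacent $J$-pairs, bound the number of ``long'' paths by $\binom{D}{2}$, observe that at most $D$ of the $H_i$ touch $S$, and then derive the contradiction from the chromatic-index hypothesis exactly as the paper does. The only real difference is organizational: you split the count into $(J,S)$-edges (exactly saturated, $e(J,S)=2\bar e(J)$, which forces each path to have no interior $J$-vertex) and $(S,S)$-edges (which then cap the number of long paths by $e(S)\le\binom{D}{2}$), while the paper computes the single quantity $X-2Y$; these are numerically identical since $X-2Y$ simplifies to $e(S)$. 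Your equality observation is a pleasant clarification but it does not change the approach.
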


\begin{proof}
Suppose for a contradiction that $G$ contains an immersion of the complete graph on $n-D$ vertices. Let $J$ be the set of vertices of this clique immersion, and $S=V(G) \setminus J$ be the remaining $D$ vertices. By Lemma \ref{obvious}, we may assume that the edges between adjacent vertices in $J$ are used as the paths between its vertices. Thus, for each pair $v,w$ of nonadjacent vertices in $J$, there is a path $P_{vw}$ joining $v$ and $w$, such that each edge of this path contains at least one vertex in $S$, and these paths are edge-disjoint.

As each vertex of $G$ (and, in particular, each vertex in $S$) has exactly $D$ non-neighb{ou}rs, the number $X$ of edges of $G$ containing at least one vertex in $S$ is
$$|S|(n-1-D)-e(S)=D(n-1-D)-e(S).$$ For the same reason, the number $Y$ of pairs of nonadjacent vertices in $J$ is
$$\frac{|J|D - \bar e(S,J)}{2}=\frac{(n-D)D - \bar e(S,J)}{2}.$$

Every path $P_{vw}$ between nonadjacent vertices in $J$ uses at least two edges and all of its edges contain a vertex in $S$.
Hence the number of such paths that use at least three edges is at most
$$X-2Y=-D-e(S)+\bar e(S,J) = -D + D^2 - e(S) - 2\bar e(S) \leq \frac{D^2-D}{2},$$
where we used the fact that $\bar e(S,J) = D^2 - 2 \bar e(S)$, which follows from every vertex in $S$ having $D$ non-neighb{ou}rs, together with $e(S) + 2\bar e(S) \geq e(S) + \bar e(S) = \binom{D}{2}$.

As $t>\tfrac{1}{2}D(D+1)$, there is at least one of the $H_j$, all of whose vertices belong to $J$, and for all pairs of vertices $v,w$ that are adjacent in $H_j$ (these pairs are nonadjacent in $G$), the path $P_{vw}$ has exactly two edges, $vs$, $sw$, where $s\in S$. Consider the col{ou}ring of the edges of $H_j$ with col{ou}r set $S$, where the col{ou}r of the edge $vw$ is the internal vertex $s$ of the path $P_{vw}$. No two edges $vw$, $vw'$ of $H_j$ which share a vertex receive the same col{ou}r $s$, as otherwise both the paths $P_{vw}$ and $P_{vw'}$ would contain the edge $vs$, which contradicts the fact that these paths are edge-disjoint. Hence, this is a proper edge-col{ou}ring of $H_j$, and since $|S|=D$, this contradicts the chromatic index of $H_j$ is $D+1$.
\end{proof}

While, in very dense graphs, Theorem \ref{counterexamples} shows that the minimum degree $d$ does not yield $K_{d+1}$-immersions, the next theorem shows that the correct bound is not too far.

\begin{theorem}
If a simple graph $G$ has $n$ vertices and minimum degree $\delta \geq n-n^{1/5}$, then $G$ contains an immersion of the complete graph on $\delta-1$ vertices.
\end{theorem}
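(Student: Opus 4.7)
Set $D = \Delta(\bar G) = n - 1 - \delta$; the hypothesis gives $D \le n^{1/5} - 1$. My plan is to immerse $K_{\delta - 1}$ on a vertex set $J \subseteq V(G)$ of size $\delta - 1$, with $S = V(G) \setminus J$ of size $D + 2$. Following the approach of Lemma \ref{obvious}, for each adjacent pair $\{u, v\} \subseteq J$ I would use the edge $uv$ as $P_{uv}$, and for each non-adjacent pair $\{v, w\} \subseteq J$ (i.e., each edge of $\bar G[J]$) I would use a length-2 path $v\, z_{vw}\, w$. Edge-disjointness forces the middle $z_{vw}$ to lie in $S \cap N_G(v) \cap N_G(w)$ (otherwise an edge $vz_{vw}$ with $z_{vw} \in J$ would coincide with the direct path $P_{v z_{vw}}$), and it forces the middles at each vertex of $J$ to be pairwise distinct. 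Thus the problem reduces to a list edge coloring of $\bar G[J]$ with palette $S$ and lists $L(\{v, w\}) = S \cap N_G(v) \cap N_G(w)$.

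\textbf{Choice of $S$ and Vizing on ``good'' edges.} I would choose $S$ to contain a universal middle: pick any $s_0 \in V(G)$, put $s_0$ together with all its (at most $D$) non-neighbors in $\bar G$ into $S$, and fill $S$ up to size $D + 2$ arbitrarily. Then $s_0 \in L(e)$ for every non-edge $e$, since $s_0$ is adjacent in $G$ to every vertex of $J$. Let $B = \bigcup_{s \in S} N_{\bar G}(s) \cap J$ be the ``bad'' vertex subset; a direct double-counting bound yields $|B| \le |S| \cdot D \le (D+2)D \le n^{2/5}$. For a non-edge $e = \{v, w\}$ with both endpoints in $J \setminus B$, both $v$ and $w$ are adjacent in $G$ to every vertex of $S$, so $L(e) = S$ has size $D + 2$. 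Applying Vizing's theorem to the subgraph $\bar G[J \setminus B]$ (of maximum degree $\le D$) yields a proper edge coloring into at most $D + 1 \le D + 2 = |S|$ color classes, which we identify with vertices of $S$; this provides valid distinct middles in $S$ for every such ``good'' non-edge.

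\textbf{Bad non-edges and main obstacle.} There remain at most $|B| \cdot D \le n^{3/5}$ ``bad'' non-edges, namely those with at least one endpoint in $B$ (and whose lists $L(e)$ may be very small, even empty). Each such pair $\{v, w\}$ has at least $2\delta - n \ge n - 2 n^{1/5}$ common neighbors in $G$, and at any step only $O(n^{3/5})$ additional edges have been used at a given vertex by the bad-edge routings so far. I would process bad non-edges greedily, at each step selecting any common neighbor $z$ (not necessarily in $S$) whose two edges $vz, zw$ are still unused. When the chosen $z$ lies in $J$, the two direct paths $P_{vz}$ and $P_{zw}$ must be rerouted through alternative length-2 paths, which is always possible because $v$ and $z$ (resp.\ $z$ and $w$) share $\Omega(n)$ common neighbors in $G$. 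The hard part---and the place where the bound $\delta \ge n - n^{1/5}$ is indispensable---is verifying that this cascade of reroutings terminates without exhausting the edge budget; the total number of additional edges touched throughout the process is $O(n^{3/5})$, comfortably smaller than the $\delta = \Omega(n)$ incident edges at each vertex, so the greedy assignment closes.
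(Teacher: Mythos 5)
Your high-level plan---choose a set $S$ of $D+2$ vertices to serve as middles, apply Vizing to $\bar G[J]$, and keep a universal fallback $s_0$ adjacent in $G$ to all of $J$---is structurally close to the paper's proof. But there is a genuine gap in how you handle the bad non-edges, and the root cause is a weaker choice of $S$.

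The paper chooses $S=\{s_0,\dots,s_{D+1}\}$ so that the vertices are pairwise at $\bar G$-distance at least $5$ (shown to exist by a short counting argument). This has two crucial consequences that your $S$ lacks: every vertex of $J$ is $\bar G$-adjacent to at most one vertex of $S$, and two failed colour-assignments incident to a common vertex $v$ would produce a short $\bar G$-path between two $s_i$'s. With that structure, the $s_0$-fallback alone suffices and the whole immersion is built in one shot with no greedy stage and no rerouting. Your $S$ (namely $s_0$ plus its $\bar G$-neighbours plus arbitrary fillers) guarantees only that $s_0$ is adjacent to all of $J$, which gives each bad non-edge a list of size at least $1$. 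Size $1$ is not enough: a single vertex $v\in B$ can be $\bar G$-adjacent to many vertices of $S$ at once, so several bad non-edges at $v$ may all have list exactly $\{s_0\}$, and $s_0$ can serve as middle for only one of them.

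Because of this, you are forced into the greedy step with $z\in J$ allowed, and here the argument breaks down. Choosing a middle $z\in J$ necessarily overwrites the two direct paths $P_{vz}=vz$ and $P_{zw}=zw$, each of which must then be rerouted through a fresh length-$2$ path. But at any vertex $u\in J$, essentially all $\approx\delta$ incident edges are already allocated to direct paths $P_{uy}$ with $y\in N_G(u)\cap J$; only the $|S|=D+2$ edges into $S$ (minus the $\le D$ already spent on middles) are free, and there is no guarantee any of those land on a common neighbour of both endpoints. So rerouting $P_{vz}$ almost always requires choosing a new middle $z'\in J$, which overwrites $P_{vz'}$ and $P_{z'z}$ in turn. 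Your claim that ``the total number of additional edges touched throughout the process is $O(n^{3/5})$'' is asserted, not proved: naively each rerouting spawns two more, and nothing in the argument bounds the depth or total size of this cascade, nor guarantees it terminates. This is the missing idea; the distance-$\ge 5$ choice of $S$ is exactly what the paper introduces to avoid it, and I do not see how to close the gap without some comparably strong structural control on $S$.
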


\begin{proof}
The complement $\bar G$ of $G$ has maximum degree $D=n-\delta-1 \leq n^{1/5}-1$. Let $S=\{s_0,\ldots,s_{D+1}\}$ be a set of $D+2$ vertices in $G$, each pair having distance in $\bar G$ at least $5$. Such a set $S$ exists. Indeed, if otherwise, the largest set $T$ of vertices with each pair of vertices in $T$ having distance in $\bar G$ at least $5$ satisfies $|T| \leq D+1$. The number of vertices having distance in $\bar G$ at most $4$ from at least one vertex in $T$ is at most
$$|T|\left(1+D+D(D-1)+D(D-1)^2+D(D-1)^3\right)\leq D^5-D^4+2D^2+D+1<n.$$
Hence, there is a vertex at distance in $\bar G$ at least $5$ from $T$, a contradiction.
This proves that $S$ exists.

Let $J=V(G) \setminus S$, so $|J|=n-(D+2)=\delta-1$. We will show that $G$
contains a clique immersion on $J$, which would complete the proof. For every
pair $v,w$ of vertices in $J$ that are adjacent in $G$, the clique immersion
will use the edge $vw$ for the path from $v$ to $w$. For each pair $v,w$ of
nonadjacent vertices, the path $P_{vw}$ between $v$ and $w$ will be of length
$2$ with the internal vertex of the path being in $S$, which we next specify.
Let $\phi:J \to \{1,\ldots,D+1\}$ be a proper edge-col{ou}ring of the induced
subgraph of $\bar G$ with vertex set $J$. Such a col{ou}ring exists by Vizing's
theorem since the maximum degree of this induced subgraph is at most $D$. If
$\phi(v,w)=i$, and the pairs $v,s_i$ and $s_i,w$ are edges of $G$, then we let
$P_{vw}$ be the path with these two edges. Otherwise, we let $P_{vw}$ be the
path with edges $vs_0$ and $s_0w$. Notice that the chosen paths exist as
otherwise $\bar G$ has an edge from $s_0$ to $v$ or $w$, the edge $vw$, and an
edge from $v$ or $w$ to $s_i$, contradicting that $s_0$ and $s_i$ have distance
at least $5$ in $\bar G$. Furthermore, these paths are edge-disjoint. Indeed,
if there were two such paths which contained an edge $vs_i$, these two paths
$P_{vw}$ and $P_{vw'}$ would share an endpoint $v$. If $i \geq 1$, then the
pairs $v,w$ and $v,w'$ both receive the col{ou}r $i$ in col{ou}ring $\phi$,
contradicting $\phi$ being a proper edge-col{ou}ring of the induced subgraph of
$\bar G$ with vertex set $J$. If $i=0$, then letting $j=\phi(v,w)$ and
$j'=\phi(v,w')$, we have $j \not = j'$ as $\phi$ is a proper edge-col{ou}ring of
the subgraph of $\bar G$ induced by $J$. Since $P_{vw}$ and $P_{vw'}$ both have
internal vertex $s_0$, then, in $G$, $s_j$ is not adjacent to $v$ or $w$,
$v$ is not adjacent to $w$, $v$ is not adjacent to $w'$, and $v$ or $w'$ is not
adjacent to $s_{j'}$, implying that the distance from $s_j$ to $s_{j'}$ in $\bar G$
is at most $4$, a contradiction.
\end{proof}

\section{Clique minors in line graphs}
\label{sect:6}

In this section we provide a corollary of our main result to the study of large clique minors
in line graphs. Some increased interest in these was recently triggered by Paul Seymour.
Our Corollary \ref{cor}, whose proof is given at the end of this section, answers one of his questions.

The following lemma and the prime number theorem imply that for every positive integer $n$, the line graph of $K_n$ has a clique minor of order $(\frac{1}{2}-o(1))n^{3/2}$.

\begin{lemma}\label{line1}
Let $p$ be an odd prime and $n=p^2+p+1$. The line graph of $K_n$ has a clique minor of order $n(p+1)/2$.
\end{lemma}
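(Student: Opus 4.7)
The plan is to exploit the fact that $n = p^2+p+1$ is exactly the number of points (and of lines) of the projective plane $\mathrm{PG}(2,p)$ of order $p$, which exists since $p$ is prime. I will use the lines of this plane to build a family of $n(p+1)/2$ edge-disjoint connected subgraphs of $K_n$ that pairwise share at least one vertex; by the characterization of clique minors in line graphs recalled in the introduction, such a family is precisely a clique minor of the required order in the line graph of $K_n$.

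First, identify the vertex set of $K_n$ with the $n$ points of $\mathrm{PG}(2,p)$. For each line $\ell$, let $V_\ell$ denote the set of $p+1$ points on $\ell$, and consider the copy of $K_{p+1}$ inside $K_n$ induced on $V_\ell$. Because any two lines of a projective plane meet in exactly one point, these $n$ copies of $K_{p+1}$ pairwise share at most one vertex and are therefore pairwise edge-disjoint.

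Next, I decompose each such $K_{p+1}$ into connected spanning subgraphs. Since $p$ is odd, $p+1$ is even, so $K_{p+1}$ admits the classical decomposition into $(p+1)/2$ edge-disjoint Hamiltonian paths. Performing this decomposition for every line yields a total of $n(p+1)/2$ edge-disjoint connected subgraphs of $K_n$, each spanning the $p+1$ points of its line.

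Finally, I verify the pairwise intersection property. Two Hamiltonian paths coming from the same line both span $V_\ell$, so they meet in every point of that line; two Hamiltonian paths coming from distinct lines $\ell \ne \ell'$ meet in the unique intersection point of $V_\ell$ and $V_{\ell'}$ guaranteed by the projective plane axioms. In either case they share at least one vertex, so this family of subgraphs witnesses a clique minor of order $n(p+1)/2$ in the line graph of $K_n$. The only nontrivial ingredients are the existence of $\mathrm{PG}(2,p)$ for prime $p$ and the Hamilton-path decomposition of $K_{p+1}$ for even $p+1$; both are standard, and it is the latter that accounts for the factor $(p+1)/2$ improvement over the naive bound of $n$ one would get by taking just one connected spanning subgraph per line.
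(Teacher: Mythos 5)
Your proof is correct and follows the same overall strategy as the paper: use the projective plane of order $p$ to partition the edges of $K_n$ into $n$ copies of $K_{p+1}$ that pairwise meet in exactly one vertex, then decompose each $K_{p+1}$ into $(p+1)/2$ edge-disjoint connected spanning subgraphs. The only difference is the decomposition tool: the paper invokes the Nash-Williams theorem to split $K_{p+1}$ into $(p+1)/2$ spanning trees, while you use the classical Walecki-style decomposition of $K_{p+1}$ (with $p+1$ even) into $(p+1)/2$ Hamiltonian paths. Since a Hamiltonian path is a particular spanning tree, your route is a constructive, more elementary special case that avoids appealing to the Nash-Williams/Tutte machinery; both yield exactly the same count and the same conclusion.
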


\begin{proof}
It suffices to find $n(p+1)/2$ edge-disjoint trees in $K_n$ each pair intersecting in at least one vertex. Since there is a finite projective plane on $n$ points, interpreting the points as vertices and lines as cliques, there is a partition of the edge set of $K_{n}$ into $n$ cliques each of order $p+1$, each pair intersecting in exactly one point. Since $p+1$ is even, by the theorem of Nash-Williams \cite{NW}, we can edge-partition the complete graph $K_{p+1}$ into $(p+1)/2$ spanning trees. These trees, $(p+1)/2$ for each of the $n$ cliques, have the property that any two of them have a vertex in common. This completes the proof.
\end{proof}

By the following lemma, the complete graph $K_{d+1}$ (and more generally, the disjoint union of copies of $K_{d+1}$) shows that the claimed bound in Corollary \ref{cor} is tight up to the constant $c$.

\begin{lemma}\label{line2}
The line graph of a graph $G$ on $n$ vertices with maximum degree $d$ does not contain as a minor a clique of order larger than $d\sqrt{n}$.
\end{lemma}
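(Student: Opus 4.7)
The plan is to translate the clique minor in $L(G)$ into a purely combinatorial statement about edge-subgraphs of $G$, and then close the argument with a double-count.

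First, I would recall the standard correspondence: via the identification $V(L(G))=E(G)$, a $K_k$-minor in $L(G)$ is the same thing as $k$ pairwise edge-disjoint, connected edge-subgraphs $H_1,\ldots,H_k$ of $G$, each with at least one edge, such that $V(H_i)\cap V(H_j)\ne\emptyset$ for every pair $i\ne j$. (A set $B\subseteq E(G)$ induces a connected subgraph of $L(G)$ exactly when the corresponding edge-subgraph of $G$ is connected, and adjacency of branch sets in $L(G)$ is the same as sharing a vertex in $G$.) Replacing each $H_i$ by a spanning tree $T_i$ preserves all of these properties and gives $|E(T_i)|=|V(T_i)|-1$.

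Next I would set up the weights $c(v):=|\{i : v\in V(T_i)\}|$ for $v\in V(G)$. Because the $T_i$ are edge-disjoint and every vertex of a tree with at least one edge has positive degree in that tree, each index $i$ with $v\in V(T_i)$ uses a distinct edge of $G$ incident with $v$; hence $c(v)\le \deg_G(v)\le d$. Summing,
\[
\sum_{v\in V(G)} c(v)\;=\;\sum_{i=1}^{k}|V(T_i)|\;=\;k+\sum_{i=1}^{k}|E(T_i)|\;\le\;k+|E(G)|\;\le\;k+\tfrac{nd}{2}.
\]

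The pairwise-intersection hypothesis then gives
\[
\binom{k}{2}\;\le\;\sum_{i<j}|V(T_i)\cap V(T_j)|\;=\;\sum_{v\in V(G)}\binom{c(v)}{2}\;\le\;\frac{d-1}{2}\sum_{v\in V(G)} c(v)\;\le\;\frac{d-1}{2}\Bigl(k+\tfrac{nd}{2}\Bigr),
\]
where the middle inequality uses $c(v)-1\le d-1$. Finally, solving the resulting quadratic inequality $k^2-dk\le (d-1)nd/2$ yields $k\le\tfrac12\bigl(d+\sqrt{d^2+2(d-1)nd}\bigr)\le d\sqrt{n}$ whenever $n\ge 4$; the remaining tiny cases $n\le 3$ are immediate since then $L(G)$ has at most three vertices. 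The proof is not hard so much as delicate: the real content is recognizing that $c(v)\le d$ is the sharp local bound, without which the double-count would only deliver something like $d\sqrt{2n}$ rather than the claimed $d\sqrt n$.
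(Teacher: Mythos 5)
Your proof is correct, and it takes a genuinely different route from the paper's.

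The paper's argument is a two-bound trick anchored at one branch set: after reducing to edge-disjoint connected subgraphs $G_1,\ldots,G_t$ that pairwise share a vertex, it picks $G_1$ with the fewest edges $e$ (and $v$ vertices, so $e\ge v-1\ge 1$). Since every $G_i$ must contain an edge incident to $V(G_1)$ and these are all distinct, $t\le dv\le d(e+1)\le 2de$; since every $G_i$ has $\ge e$ edges and the $G_i$ are edge-disjoint, $t\le |E(G)|/e\le dn/(2e)$. Multiplying the two bounds kills $e$ and gives $t^2\le d^2n$ outright. Your argument instead passes to spanning trees, defines the global multiplicity function $c(v)=|\{i:v\in V(T_i)\}|$, observes the sharp local bound $c(v)\le d$ (each tree through $v$ uses a private edge at $v$), and double-counts $\sum_v\binom{c(v)}{2}\ge\binom{k}{2}$ against $\sum_v c(v)\le k+|E(G)|$, ending with a quadratic in $k$. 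Both are short double-counts; the paper's is a touch slicker since the two bounds multiply cleanly and no quadratic needs solving or small-$n$ case split, whereas your $c(v)\le d$ observation makes the extremal structure (every vertex saturated by trees) more visible and is arguably the more systematic way to generalize. One minor point: you should state explicitly that $G$ is simple (as it is in the enclosing Corollary \ref{cor}), since your dispatch of $n\le 3$ via ``$L(G)$ has at most three vertices'' relies on it; the main double-count itself works for multigraphs.
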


\begin{proof}
Let $G_1,\ldots,G_t$ be edge-disjoint connected subgraphs of $G$, each pair intersecting in at least one vertex. Suppose without loss of generality that
$G_1$ has the fewest number of edges, and let $e=|E(G_1)|$, $v=|V(G_1)|$, so $e \geq v-1$. The number of edges containing a vertex in $V(G_1)$ is at most $dv$. Since each $G_i$ contains at least one of these edges, $t \leq dv \leq d(e+1) \leq 2de$. Since each $G_i$ has at least $e$ edges, $t \leq |E(G)|/e \leq \frac{dn}{2e}$. These two upper bounds imply $t \leq d\sqrt{n}$.
\end{proof}

\begin{proof}[Proof of Corollary \ref{cor}]
By using Theorem \ref{main}, we first get a clique immersion of order proportional to $d$.
Now, we use the fact that in the line graph of
a clique (and hence of a clique immersion) the bound of $d^{3/2}$ is tight up
to a constant factor, cf. Lemmas \ref{line1} and \ref{line2} for details.
This completes the proof.
\end{proof}

\subsubsection*{Acknowledgement:} We are greatly indebted to Paul Seymour, Maria
Chudnovsky, and Kevin Milans for helpful conversations on the subject.

\end{document}